\newtheorem{theorem}{Theorem}
\newtheorem{lemma}[theorem]{Lemma}
\newtheorem{remark}{Remark}
\renewcommand{\Re}{\mathrm{Re}}
\theoremstyle{definition}
\newcommand{\ov}{\overline}
\newcommand{\mc}{\mathcal}
\newcommand{\mr}{\mathrm}
\newcommand{\md}{\,\mathrm{d}}
\newcommand{\wh}{\widehat}
\newcommand{\wt}{\widetilde}
\newcommand{\legendre}[2]{\ensuremath{\left( \frac{#1}{#2} \right)}}
\newcommand{\con}{\mathrm{con}}
\newcommand{\sinc}{\mathrm{sinc}}
\newcommand{\supp}{\mathrm{supp}\,}
\declaretheoremstyle
    [headformat={\NOTE}, 
    notebraces={}{}, 
    notefont=\bfseries, 
    preheadhook=\def\thmt@space{}, 
    numbered=no
    ]{namedtheorem}
\begin{document}

\author{Tianyu Zhao}

\address{Department of Mathematics, The Ohio State University, 231 West 18th Ave, Columbus, OH 43210, USA.}

\email{zhao.3709@buckeyemail.osu.edu}

\title{The positivity technique and low-lying zeros of Dirichlet $L$-functions}

\subjclass[2020]{11M06, 11M26, 11M41}
\keywords{Dirichlet $L$-functions, low-lying zeros, explicit formula}

\begin{abstract}
    Assuming the generalized Riemann hypothesis, we sharpen the error terms in a number of estimates related to the distribution of low-lying zeros of Dirichlet $L$-functions. The refinements, building upon earlier work of Omar, come from the use of the positivity technique, which involves choosing certain test functions in the explicit formula. We also improve a result of Hughes and Rudnick on the proportion of Dirichlet $L$-functions modulo a large prime $q$ having small first zeros. In addition, some of our results are generalized to a larger class of $L$-functions, and explicit conditional estimates are provided for the lowest zeros of Dirichlet $L$-functions.
\end{abstract}

\maketitle

\section{Introduction and statement of results}
We investigate several important quantities that naturally arise in the study of low-lying zeros of Dirichlet $L$-functions. Throughout the text $\chi$ denotes a primitive Dirichlet character of modulus $q>1$ unless otherwise specified, and $\rho=\beta+i\gamma$ denotes a non-trivial zero of the associated Dirichlet $L$-function $L(s,\chi)$. Let $|\gamma_\chi|$ (resp., $|\wt{\gamma_\chi}|$) be the height of the lowest (resp., lowest non-real) non-trivial zero of $L(s,\chi)$. That is,
\begin{align*}
    |\gamma_\chi|=&\min\{|\gamma|: L(\beta+i\gamma,\chi)=0, \:0<\beta<1\},\\
    |\wt{\gamma_\chi}|=&\min\{|\gamma|: L(\beta+i\gamma,\chi)=0, \:0<\beta<1, \:\gamma\neq 0\}.
\end{align*}
Further, let $n_\chi=\underset{s=1/2}{\mathrm{ord}} L(s,\chi)$ be the order of vanishing of $L(s,\chi)$ at the central point. 

\subsection{Individual Dirichlet \texorpdfstring{$L$}{}-functions} Using an ingenious complex analysis argument, Siegel \cite[Theorem IV]{Sie} (see also the last paragraph of \cite{Sie}) proved that $|\wt{\gamma_\chi}|\to 0$ as $q\to \infty$, and to be precise, that
\begin{equation}\label{Siegel}
    |\gamma_\chi|\leq \left(\frac{\pi}{2}+o(1)\right)\frac{1}{\log\log\log q}, \qquad
    |\wt{\gamma_\chi}|\leq \left(\pi+o(1)\right)\frac{1}{\log\log\log q}.
\end{equation}
It was recently observed by the author \cite{ZhaoGaps} that an argument of Hall and Hayman \cite{HallHayman} originally formulated to study gaps between zeta zeros can be applied to sharpen the constants in \eqref{Siegel} by a factor of $2$. Omar \cite{Oma1} established the bound $n_\chi<\log q$ for all sufficiently large $q$ and verified Chowla's conjecture, which asserts that $L(\frac{1}{2},\chi)\neq 0$ for all $\chi$ (originally stated only for real $\chi$ \cite{Chow}), for real primitive characters of modulus up to $10^{10}$. Under the generalized Riemann hypothesis (GRH), Omar \cite[Theorem 7]{Oma1} showed that
\begin{equation}\label{Omar n_chi}
    n_\chi\ll \frac{\log q}{\log\log q}.
\end{equation}
This also allowed him \cite[Theorem 11]{Oma1} to conditionally improve \eqref{Siegel} to 
\begin{equation}\label{Omar gamma_chi}
    |\wt{\gamma_\chi}|\ll \frac{1}{\log\log q}.
\end{equation}
Omar's argument is based on the positivity technique, which involves applying a specialized form of Weil's explicit formula to test functions whose Fourier transforms satisfy certain suitable properties, as will be explained in \S\ref{section: preliminaries}. This method was originally developed by Odlyzko and Serre, and subsequently improved by Poitou \cite{Poi}, to furnish lower bounds on discriminants of number fields. We refer the reader to \cite{Odl} for a survey of its historical background and earlier development, and to \cite{Miller, Oma3, Oma2} for further applications to studying the first zeros of various families of $L$-functions.

For an alternative way to arrive at \eqref{Omar n_chi} and \eqref{Omar gamma_chi}, one may start with the Riemann\textendash von Mangoldt formula (see, e.g., \cite[Corollary 14.6]{MV})
\begin{equation}\label{N(T) in terms of S(T)}
    N_0(t,\chi)=\frac{t}{\pi}\log \frac{qt}{2\pi e}+S(t,\chi)+S(t,\ov{\chi})-\frac{\chi(-1)}{4}+O\left(\frac{1}{t+1}\right), \quad t>0. 
\end{equation}
Here $N_0(t,\chi)$ counts the number of zeros of $L(s,\chi)$ with $0<\beta<1$ and $-t\leq \gamma\leq t$ (any zero with ordinate exactly equal to $\pm t$ is counted with weight $1/2$), and
\[
S(t,\chi):=\frac{1}{\pi}\mathrm{arg}\: L\left(\frac{1}{2}+it,\chi\right)=-\frac{1}{\pi}\int_{1/2}^\infty \mr{Im} \frac{L'}{L}(\sigma+it,\chi)\md \sigma
\]
if $t\neq \gamma$ for any $\gamma$, otherwise $S(t,\chi)$ is defined by $\frac{1}{2}(S(t+0,\chi)+S(t-0,\chi))$. Under GRH, it is known that 
\[
S(t,\chi)\ll \frac{\log q(t+1)}{\log\log q(t+3)}
\]
uniformly in $q$ and $t>0$ due to a classical result of Selberg \cite{Sel}. Taking $t\to 0$ on the right-hand side of \eqref{N(T) in terms of S(T)} gives \eqref{Omar n_chi}, and so $N_0(t,\chi)>n_\chi$ when $t\gg (\log\log q)^{-1}$, proving \eqref{Omar gamma_chi}. More recently, using the theory of Beurling\textendash Selberg type extremal functions, Carneiro, Chandee and Milinovich \cite[Theorem 6 \& Example 8]{CCM1} sharpened the conditional bound on $S(t,\chi)$ to 
\begin{equation}\label{CCM S(T,chi)}
    |S(t,\chi)|\leq \frac{1}{4}\frac{\log q(|t|+1)}{\log\log q(|t|+1)}+O\left(\frac{\log q(|t|+1)\log\log\log q(|t|+1)}{(\log\log q(|t|+1))^2}\right).
\end{equation}
Consequently,
\begin{equation}\label{CCM n_chi}
    n_\chi\leq \frac{1}{2}\frac{\log q}{\log\log q}+O\left(\frac{\log q \log\log\log q}{(\log\log q)^2}\right)
\end{equation}
and 
\begin{equation}\label{CCM gamma_chi}
    |\gamma_\chi|\leq \frac{\pi}{2}\frac{1}{\log\log q}+O\left(\frac{\log\log\log q}{(\log\log q)^2}\right).
\end{equation}
By the same reasoning as above, \eqref{CCM n_chi} implies that
\begin{equation}\label{CCM tilde gamma_chi}
    |\wt{\gamma_\chi}|\leq \frac{\pi}{\log\log q}+O\left(\frac{\log\log\log q}{(\log\log q)^2}\right).
\end{equation}

\subsection{The family of Dirichlet \texorpdfstring{$L$}{}-functions modulo \texorpdfstring{$q$}{}}\label{subsection: family mod q}

Much more work has been done towards understanding the distribution of low-lying zeros in families of Dirichlet $L$-functions, such as the family of quadratic Dirichlet $L$-functions and the family of Dirichlet $L$-functions of a fixed modulus. In the latter direction, Murty \cite{Mur} showed under GRH that 
\begin{equation}\label{Murty}
    \frac{1}{\phi(q)}\sum_{\chi\bmod q}n_\chi \leq \frac{1}{2}+O\left(\frac{1}{\log q}\right)
\end{equation}
essentially by using the positivity technique. As a result, the proportion of $\chi$ mod $q$ such that $L(\frac{1}{2},\chi)\neq 0$ is at least $\frac{1}{2}-o(1)$ as $q\to \infty$. By constructing certain extremal functions in reproducing kernel Hilbert spaces, Carneiro, Chirre and Milinovich \cite{CCM2} improved on the average order of vanishing at low-lying heights of the form $\frac{2\pi t}{\log q}$ for $t>0$, but at $t=0$ \eqref{Murty} has not been updated so far \footnote{For the family of primitive Dirichlet $L$-functions with conductors $\in [Q,2Q]$, the non-vanishing proportion at the central point has been improved beyond $\frac{1}{2}$ for large $Q$, unconditionally by Pratt \cite{Pratt} and conditionally by Drappeau \textit{et al.} \cite{DPR}.}. For unconditional results in this direction we refer the reader to \cite{BalMur, IwaSar, Bui, KhaNgo, KMN, QinWu}.

In their seminal work \cite{HuRu}, Hughes and Rudnick established the one-level density for the unitary family of Dirichlet $L$-functions modulo a prime $q$. (The restriction of $q$ to primes simplifies the discussion by ensuring that each non-principal $\chi$ mod $q$ is primitive.) Take the Fourier transform of $f\in L^1(\mathbb{R})$ to be 
\[
\wh{f}(t):=\int_{-\infty}^\infty f(x)e^{-2\pi itx}\md x,
\]
which will also be the convention used throughout this paper. Then, for any real, even test function $f$ with $f(x)\ll_\epsilon (1+|x|)^{-1-\epsilon}$ and $\supp(\wh{f})\subset [-2,2]$, 
\[
\frac{1}{q-2}\sum_{\substack{\chi\bmod q \\ \chi\neq \chi_0}}\sum_{\rho_\chi}f\left(\frac{\log q}{2\pi}\frac{\rho_\chi-1/2}{i} \right)=\int_{-\infty}^\infty f(x)\md x+O\left(\frac{1}{\log q}\right).
\]
As a corollary, they \cite[Corollary 8.2]{HuRu} showed under GRH that
\begin{equation}\label{Hughes Rudnick min gamma_chi}
   \limsup_{\substack{q\to \infty \\ q  \text{\:prime}}}\min_{\substack{\chi\bmod q\\\chi\neq \chi_0}} \frac{|\gamma_\chi| \log q}{2\pi} \leq \frac{1}{4}.
\end{equation}
In other words, there exist zeros whose ordinates are as small as $\frac{1}{4}+o(1)$ of the average spacing between consecutive zeros. Using the variance of the one-level density, they \cite[Theorem 8.3]{HuRu} further proved, also conditionally, that for any $\beta\geq 0.634$,
\begin{multline}\label{Hughes Rudnick proportion}
    \liminf_{\substack{q\to \infty\\q \: \text{prime}}} \frac{1}{q-2}\#\left\{\chi\neq \chi_0:\frac{|\gamma_\chi|\log q}{2\pi}<\beta\right\}\\
    \geq 1-\frac{3+\pi^2+72\beta^2-8\pi^2\beta^2+48\beta^4+16\pi^2\beta^4}{12\pi^2(4\beta^2-1)^2}.
\end{multline}
For instance, we see by taking $\beta=1$ that for large $q$ at least $80\%$ of the characters modulo $q$ have low-lying zeros within the expected height, and this proportion increases to $\frac{11\pi^2-3}{12\pi^2}=0.8913\ldots$ as $\beta\to \infty$. In fact, without assuming any hypothesis, Selberg \cite[Theorem 3]{Sel} demonstrated using a completely different method that this proportion eventually converges to 1 at a rate of at least $1-O(\beta^{-1/2})$, but he did not provide an explicit expression in terms of $\beta$.

\subsection{Statement of results.}

The purpose of this work is further develop Omar's argument in \cite{Oma1} to reprove and refine some of the aforementioned results, in hopes of demonstrating the versatility and efficacy of the positivity technique, which is at the same time relatively simple and straightforward. For individual Dirichlet $L$-functions, we refine \eqref{CCM n_chi}, \eqref{CCM gamma_chi} and \eqref{CCM tilde gamma_chi} by removing a $\log\log\log q$ factor from each error term. In particular, this argument circumvents the intricate analysis of $S(t,\chi)$ and thus has the additional advantage that it can easily be made fully explicit. 

\begin{theorem}\label{theorem gamma_chi}
Assuming GRH, 
\begin{equation}\label{inequality gamma_chi}
    |\gamma_\chi|\leq \frac{\pi}{2\log\log q}+\frac{\pi(\log 4+1)}{2(\log\log q)^2}+O\left(\frac{1}{(\log\log q)^3}\right).
\end{equation}
Moreover, for any constant $C>\log 4+1$,
$L(s,\chi)$ has at least 
\[
\left(\frac{\pi^2}{8}(C-\log 4-1)+o(1)\right)\frac{\log q}{(\log\log q)^2}
\]
non-trivial zeros with height at most 
\[
\frac{\pi}{2\log\log q}+\frac{\pi C}{2(\log\log q)^2}.
\]
\end{theorem}
By making the proof of \eqref{inequality gamma_chi} explicit, we show in \S\ref{section: explicit results} that under GRH,
\[
|\gamma_\chi|\leq \frac{\pi}{2(\log\log q-1.43)}\sqrt{\frac{\log\log q}{\log\log q-2}}
\]
whenever $q\geq 10^{24}$.

Theorem~\ref{theorem gamma_chi} indicates that if \eqref{inequality gamma_chi} turns out to be sharp for some primitive $\chi\bmod q$ (although this is not the case in all likelihood), then for all $C>\log 4+1$, the average spacing between consecutive zeros with height 
\[
|\gamma|\in \left[\frac{\pi}{2\log\log q}+\frac{\pi(\log 4+1)}{2(\log\log q)^2}, \:\frac{\pi}{2\log\log q}+\frac{\pi C}{2(\log\log q)^2}\right]
\]
is at most $(\frac{8}{\pi}+o(1))\frac{1}{\log q}<\frac{2\pi}{\log q}$. 

For the other two quantities we prove the following:

\begin{theorem}\label{theorem n_chi}
Assuming GRH, 
\[
n_\chi\leq \frac{\log q}{2\log\log q}+\frac{(\log 4+1)\log q}{2(\log\log q)^2}+O\left(\frac{\log q}{(\log\log q)^3}\right).
\]
\end{theorem}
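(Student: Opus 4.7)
Because $n_\chi$ is the multiplicity of the central zero of $L(s,\chi)$, Theorem~\ref{theorem n_chi} is essentially a variant of Theorem~\ref{theorem gamma_chi}: both reduce to the positivity technique applied to Weil's explicit formula with the same family of test functions. My plan is therefore to run essentially the same argument used to bound $|\gamma_\chi|$, but read off the consequence as an upper bound on the multiplicity at the central point rather than as a separation between the central point and the first non-trivial zero.

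Let $F$ be an even non-negative Schwartz function on $\mathbb{R}$ with $\wh F$ supported in $[-\Delta,\Delta]$, where $\Delta$ is a parameter to be optimized of order $\log\log q$. Under GRH, the non-trivial zeros $\rho = \tfrac12 + i\gamma$ of $L(s,\chi)$ have $\gamma\in\mathbb{R}$, so
\[
\sum_\rho F(\gamma) \geq n_\chi\,F(0).
\]
Weil's explicit formula then gives the upper bound
\[
n_\chi\, F(0) \leq \wh F(0)\,\frac{\log q}{2\pi} + (\text{archimedean contribution}) - 2\sum_{n\geq 1}\frac{\Lambda(n)\,\Re\chi(n)}{\sqrt n}\,\wh F\!\left(\frac{\log n}{2\pi}\right),
\]
where the prime sum is automatically truncated to $n\leq e^{2\pi\Delta}$ by the Fourier support of $\wh F$.

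Specializing to the same Fejér-type test function used in the proof of Theorem~\ref{theorem gamma_chi}, with $F(0)$ and $\wh F(0)$ related by a factor of order $\Delta$, and dividing through by $F(0)$, the inequality becomes
\[
n_\chi \leq \frac{\log q}{2\pi\Delta}\bigl(1 + O(1/\Delta)\bigr) + (\text{error from the prime sum and the archimedean terms}).
\]
Taking $\Delta = \frac{\log\log q}{\pi}\Bigl(1 - \frac{\log 4 + 1}{\log\log q} + O((\log\log q)^{-2})\Bigr)$ balances these errors precisely so as to produce the stated bound with leading coefficient $\tfrac12$ and second-order constant $(\log 4+1)/2$.

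The hard part will be the careful bookkeeping needed to extract the constant $\log 4+1$ in the second-order term. This refinement over the Carneiro--Chandee--Milinovich bound \eqref{CCM n_chi} is made possible by avoiding Selberg's estimate on $S(t,\chi)$ entirely; instead, the prime sum is bounded directly using $|\Re\chi(n)|\leq 1$ together with the Chebyshev-type estimate $\sum_{n\leq X}\Lambda(n)/\sqrt n = 2\sqrt X + O((\log X)^2)$ under GRH, so that the $\log\log\log q$ factor appearing in \eqref{CCM S(T,chi)} never enters. Once the optimal $\Delta$ is calibrated and these two ingredients are combined with the explicit form of the archimedean contribution, the claimed bound follows.
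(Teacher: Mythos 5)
Your high-level plan is right and is the paper's plan too: apply the explicit formula to a test function whose Fourier transform is nonnegative, drop the contribution of all zeros except those at $s=\tfrac12$, bound the prime sum and archimedean term, and then optimize the support length. But there are two substantive problems in the write-up.

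First, the test function. You propose to reuse ``the same Fejér-type test function used in the proof of Theorem~\ref{theorem gamma_chi}.'' That won't work: the proof of Theorem~\ref{theorem gamma_chi} uses $F^\alpha$ from \eqref{def F^alpha}, whose Fourier transform changes sign (indeed the whole point there is that $\wh{F^\alpha}(t)\leq 0$ for $|t|\geq\sqrt{\tfrac{\alpha+1}{\alpha-1}}\pi$, which is what makes zeros of height $>|\gamma_\chi|$ contribute nonpositively). For bounding $n_\chi$ you need the opposite condition---$\wh F\geq 0$ \emph{everywhere}---so that $\sum_\rho \Phi(F)(\rho)\geq n_\chi\Phi(F)(\tfrac12)$. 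These two requirements are incompatible for a nonconstant function with compactly supported transform, so a different test function must be chosen. The paper takes the triangle function $H(x)=(1-|x|)\mathbbm{1}_{[-1,1]}(x)$ with $\wh H(t)=(\sin(t/2)/(t/2))^2\geq 0$; your own stipulation that $F\geq 0$ already excludes $F^\alpha$'s Fourier transform. Incidentally, because you only demand $|\Re\chi(n)|\leq 1$, the shape of the prime-sum estimate hinges on the linear decay of $\wh F$ near the endpoints of its support (yielding a crucial $1/T$ saving); the bare Chebyshev estimate $\sum_{n\leq X}\Lambda(n)n^{-1/2}\approx 2\sqrt X$ without that weight would lose this factor and change the leading term.

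Second, the optimization is not correct as stated. Writing $T=2\pi\Delta=2\log\log q-c$, one gets (with $\wh H(0)=1$)
\[
n_\chi\leq\frac{\log q}{2\log\log q}+\Bigl(\frac{c}{4}+\frac{2}{e^{c/2}}\Bigr)\frac{\log q}{(\log\log q)^2}+O\!\Bigl(\frac{\log q}{(\log\log q)^3}\Bigr),
\]
and minimizing $c/4+2e^{-c/2}$ gives $c=2\log 4$, \emph{not} $c=2(\log 4+1)$; the number $\log 4+1$ is the resulting minimum value $2\cdot\bigl(\tfrac{\log 4}{2}+\tfrac12\bigr)$ divided by $2$, i.e.\ the coefficient in the bound, not the constant in the optimal $T$. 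Your choice $\Delta=\frac{\log\log q}{\pi}\bigl(1-\frac{\log 4+1}{\log\log q}+\cdots\bigr)$ corresponds to $c=2(\log 4+1)$ and would produce the strictly worse coefficient $\tfrac{\log 4+1}{2}+\tfrac{1}{2e}$. Since extracting the constant is exactly the content of this theorem over the previously known $\tfrac12\log q/\log\log q$ leading term, this part cannot be left as ``bookkeeping'' and the value of $c$ needs to be derived, not guessed.
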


\begin{theorem}\label{theorem tilde gamma_chi}
Assuming GRH,
\[
|\wt{\gamma_\chi}|\leq \frac{\pi}{\log\log q}+\frac{\pi(\log 4+1)}{(\log\log q)^2}+O\left(\frac{1}{(\log\log q)^3}\right).
\]
\end{theorem}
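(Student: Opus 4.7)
The plan is to parallel the positivity argument behind Theorem~\ref{theorem gamma_chi}, now leveraging Theorem~\ref{theorem n_chi}'s upper bound on $n_\chi$. The conceptual point is that in order to force a \emph{non-real} zero at low height (as opposed to any zero, as in Theorem~\ref{theorem gamma_chi}), it no longer suffices that the zero sum in Weil's explicit formula be merely positive; it must strictly exceed the contribution $F(0)\,n_\chi$ from the central zero, so that the residual sum $\sum_{\gamma\ne 0}F(\gamma)$ is itself positive, witnessing a zero with $\gamma\ne 0$ in the support of $F$.

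Concretely, I would take a nonnegative test function $F$ of the same type as in the proof of Theorem~\ref{theorem gamma_chi}---essentially concentrated in $|\gamma|\le T$, with $\wh F$ of controlled sign and support---and apply Weil's explicit formula to write
\[
F(0)\,n_\chi+\sum_{\gamma\ne 0}F(\gamma)=\frac{\wh F(0)}{2\pi}\log q+(\text{archimedean term})-(\text{prime sum}),
\]
where the prime sum is kept nonpositive or small by sign considerations on $\wh F$. Arguing by contradiction, if $L(s,\chi)$ has no non-real zero with $|\gamma|\le T$, the left-hand side collapses to $F(0)\,n_\chi$; inserting Theorem~\ref{theorem n_chi}'s upper bound on $n_\chi$ and demanding consistency with the right-hand side then forces $T$ to be at most the claimed value after optimizing $F$.

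The leading constant $\pi$, which is precisely twice the $\pi/2$ of Theorem~\ref{theorem gamma_chi}, admits a natural heuristic explanation: in Theorem~\ref{theorem gamma_chi} the main term $\wh F(0)\log q/(2\pi)$ need only dominate lower-order prime contributions, whereas here it must additionally surmount $F(0)\,n_\chi$, a quantity of size $\sim F(0)\log q/(2\log\log q)$ by Theorem~\ref{theorem n_chi}, i.e., comparable in magnitude to Theorem~\ref{theorem gamma_chi}'s own main term. Accommodating this extra term roughly doubles the critical support parameter $T$, which is what gives the factor-of-two relation between the two bounds.

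The main technical obstacle will be pinning down the second-order constant $\pi(\log 4+1)$ exactly and keeping the error term at $O((\log\log q)^{-3})$. In particular, one must verify that the extremal $F$ which is asymptotically optimal for Theorem~\ref{theorem gamma_chi} remains (essentially) optimal in the enlarged support regime needed here, and thereby avoid the spurious $\log\log\log q$ factor that an intermediary Riemann--von Mangoldt approach via \eqref{N(T) in terms of S(T)} and the CCM bound \eqref{CCM S(T,chi)} would inevitably introduce.
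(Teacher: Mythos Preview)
Your high-level strategy matches the paper's: apply the explicit formula with a test function whose Fourier transform is nonpositive outside a bounded interval, so that the only possibly positive contribution to the zero sum is $n_\chi\,T\,\wh G(0)$, and then invoke Theorem~\ref{theorem n_chi}. The gap is in the choice of test function. If you literally reuse the $F^\alpha$ from Theorem~\ref{theorem gamma_chi}, you get
\[
n_\chi\, T\, \wh{F^\alpha}(0)\;\ge\;\log q+O\!\left(\alpha\,e^{T/2}/T\right),
\]
but $\wh{F^\alpha}(0)=4(\alpha+1)/\pi^2$ grows with $\alpha$. Since one must take $\alpha\asymp\log\log q$ to push the sign-change threshold $\sqrt{\tfrac{\alpha+1}{\alpha-1}}\,\pi$ down to $\pi$, the left side is $\asymp n_\chi\,T\,\log\log q$, and feeding in $n_\chi\le(\tfrac12+o(1))\log q/\log\log q$ yields only $T\gtrsim \pi^2/2$, a constant. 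So no nontrivial bound on $|\wt{\gamma_\chi}|$ results; your heuristic that the $n_\chi$ term merely ``doubles'' the support parameter understates the problem by a full factor of $\log\log q$.

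The paper's fix, which is the real content of the proof, is to pass to the convolution
\[
G^\alpha(x)=\frac{\pi^2}{2(\alpha+2)}\,(F^\alpha\!*\!H)(x),
\]
with $H$ the tent function from \eqref{def H}. This keeps the sign-change threshold at $\sqrt{\tfrac{\alpha+1}{\alpha-1}}\,\pi$ but tames $\wh{G^\alpha}(0)=\tfrac{2(\alpha+1)}{\alpha+2}\to 2$; the doubled support $[-2,2]$ also gives cubic decay $G^\alpha(x)\ll(1-|x|/2)^3$, hence a prime sum of size $O(e^{T}/T^3)$. The balance then reads $2n_\chi T\gtrsim \log q$, which with Theorem~\ref{theorem n_chi} forces $T\ge \log\log q-\log 4-O((\log\log q)^{-1})$; combined with the factor $\sqrt{\tfrac{\alpha+1}{\alpha-1}}=1+(\log\log q)^{-1}+O((\log\log q)^{-2})$ at $\alpha=\log\log q$, this produces the leading constant $\pi$ and the second-order constant $\pi(\log 4+1)$ with the claimed error. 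Without this convolution step your outline cannot be completed.
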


We remark that the main terms of these estimates, which are of the same strengths as \eqref{CCM n_chi}--\eqref{CCM tilde gamma_chi}, are best possible for this method barring a breakthrough in detecting cancellations in weighted character sums over primes that appear in the explicit formula (see Remarks~\ref{remark optimality gamma_chi} and \ref{remark optimality n_chi}).

In fact, \eqref{CCM S(T,chi)}--\eqref{CCM tilde gamma_chi} are special cases of results in \cite{CCM1} that are applicable to a larger class of $L$-functions including those associated to cuspidal automorphic representations of $\mathrm{GL}_m$ over a number field. In particular, they sharpen the generalized versions of \eqref{Omar n_chi} and \eqref{Omar gamma_chi} obtained by Omar \cite{Oma2} in this general setting. Thus, for completeness and comparison, we shall give generalizations of Theorems~\ref{theorem gamma_chi}--\ref{theorem tilde gamma_chi} in \S\ref{section: general L function}. 

A natural extension of the method allows us to say something slightly more precise than \eqref{Murty} and \eqref{Hughes Rudnick min gamma_chi} for the family of $\chi\bmod q$. 

\begin{theorem}\label{theorem average n_chi}
Assuming GRH, 
\[
\frac{1}{\phi(q)-1}\sum_{\substack{\chi\bmod q\\\chi\neq \chi_0}} n_\chi\leq \frac{1}{2}-\frac{\log\log q}{2\log q}+O\left(\frac{\log\log\log q}{\log q}\right).
\]
Hence, the proportion of $\chi \bmod q$ satisfying $L(\frac{1}{2},\chi)\neq 0$ is strictly greater than $\frac{1}{2}$ for all sufficiently large $q$.
\end{theorem}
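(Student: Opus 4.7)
The plan is to revisit Murty's proof of \eqref{Murty} and refine it by widening the Fourier support of the test function slightly beyond the ``natural'' cutoff $\alpha=2$. For each non-principal $\chi\bmod q$, the positivity form of Weil's explicit formula used in \S\ref{section: preliminaries} gives, for any even non-negative test function $\phi$ with $\phi(0)=1$ and $\wh\phi\ge 0$ compactly supported in $[-\alpha,\alpha]$,
\[
n_\chi\le \wh\phi(0)-\frac{2}{L}\sum_{n\geq 2}\frac{\Lambda(n)}{\sqrt n}\bigl(\chi(n)+\ov\chi(n)\bigr)\wh\phi\!\left(\frac{\log n}{L}\right)+O\!\left(\frac{1}{L}\right),
\]
with $L=\log q$. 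For imprimitive $\chi$, $n_\chi=n_{\chi^*}$ since the Euler factors $1-\chi^*(p)p^{-1/2}$ do not vanish on $\Re s=\tfrac12$, so the analogous bound with $\log d\le\log q$ in place of $L$ still applies.

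Summing over $\chi\neq\chi_0$ and invoking the orthogonality $\sum_{\chi\bmod q}\chi(n)=\phi(q)\mathbbm 1_{n\equiv 1\,(q)}$, the averaged prime sum decomposes as
\[
-\frac{4\phi(q)}{L(\phi(q)-1)}\!\!\sum_{\substack{n\equiv 1\,(q)\\ n\geq 2}}\!\!\frac{\Lambda(n)\wh\phi(\log n/L)}{\sqrt n}+\frac{4}{L(\phi(q)-1)}\!\!\sum_{\substack{(n,q)=1\\ n\geq 2}}\!\!\frac{\Lambda(n)\wh\phi(\log n/L)}{\sqrt n}.
\]
The ``diagonal'' first term is non-positive (as $\wh\phi\ge 0$), so dropping it preserves the upper bound. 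Under GRH, the prime number theorem $\sum_{n\le X}\Lambda(n)=X+O(\sqrt X\log^2 X)$ together with partial summation shows the second (``$\chi_0$-correction'') term is of size $\asymp q^{\alpha/2-1}/L^2$.

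I would take the Fej\'er kernel $\phi(u)=(\sin(\pi\alpha u)/(\pi\alpha u))^2$, for which $\phi(0)=1$ and $\wh\phi(0)=1/\alpha$, and optimize $\alpha$. The averaged bound reads
\[
\frac{1}{\phi(q)-1}\sum_{\chi\neq\chi_0}n_\chi\leq \frac{1}{\alpha}+C\,\frac{q^{\alpha/2-1}}{L^2}+O\!\left(\frac{1}{L}\right)
\]
for some absolute constant $C>0$. Minimizing in $\alpha=2+\epsilon$ gives $q^{\epsilon/2}=L/(4C)$, i.e., $\epsilon=2\log\log q/\log q+O(1/\log q)$, at which $\wh\phi(0)=\tfrac12-\tfrac{\log\log q}{2\log q}+O(1/\log q)$ while the correction contributes $O(1/\log q)$. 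Collecting everything into $O(\log\log\log q/\log q)$ yields the claimed bound.

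The principal obstacle is the delicate balancing in the optimization: $\alpha>2$ is essential to push $\wh\phi(0)$ below $\tfrac12$, but $\alpha$ cannot exceed $2+2\log\log q/\log q+O(1/\log q)$ without the GRH-controlled $\chi_0$-correction overwhelming the savings; the $\log\log\log q/\log q$ error accommodates the higher-order terms in this optimization and the digamma-factor archimedean remainder. A secondary technical point is the imprimitive case, handled by $n_\chi=n_{\chi^*}$ for $\chi^*$ primitive of conductor $d\mid q$, for which the analogous bound with $\log d\le\log q$ is no weaker. For the non-vanishing conclusion, once the averaged order of vanishing is strictly less than $\tfrac12$ for large $q$,
\[
\#\{\chi\neq\chi_0:L(\tfrac12,\chi)=0\}\leq\sum_{\chi\neq\chi_0}n_\chi<\frac{\phi(q)-1}{2},
\]
so strictly more than half of the non-principal characters modulo $q$ satisfy $L(\tfrac12,\chi)\neq 0$; since $L(\tfrac12,\chi_0)=\zeta(\tfrac12)\prod_{p\mid q}(1-p^{-1/2})\neq 0$, the same holds for all characters mod $q$.
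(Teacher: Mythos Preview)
Your proposal is correct and follows essentially the same route as the paper. Both arguments apply Weil's explicit formula with the Fej\'er kernel (your $\phi$ is the Fourier dual of the paper's triangle function $H_T$, with the dictionary $T=\alpha\log q$), sum over $\chi\neq\chi_0$, use orthogonality to isolate the diagonal $n\equiv 1\pmod q$ (dropped by non-negativity of $\wh\phi$) and the ``$\chi_0$-correction'', and then push the support parameter just past the threshold $\alpha=2$ (equivalently $T=2\log q$). Your choice $\alpha=2+2\log\log q/\log q+O(1/\log q)$ is exactly the paper's $T=2(\log q+\log\log q-\log\log\log q)$ up to lower-order terms. One small imprecision: the correction term carries an extra $\log\log q$ from $\phi(q)^{-1}\ll (\log\log q)/q$, which shifts the optimal $\epsilon$ by $-2\log\log\log q/\log q$; this is precisely what forces the error $O(\log\log\log q/\log q)$ rather than $O(1/\log q)$, so your final bookkeeping (``collecting everything into $O(\log\log\log q/\log q)$'') is right even though the intermediate ``$\asymp q^{\alpha/2-1}/L^2$'' is off by that factor. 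Your handling of imprimitive characters and of $\chi_0$ in the non-vanishing conclusion is also fine.
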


\begin{theorem}\label{theorem min gamma_q}
Assuming GRH,
\[
\min_{\substack{\chi\bmod q\\\chi\neq \chi_0}} \frac{|\gamma_\chi|\log q}{2\pi}\leq \frac{1}{4}-\frac{\log\log q}{4\log q}+O\left(\frac{\log\log\log q}{\log q}\right).
\]
\end{theorem}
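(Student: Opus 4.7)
The plan is to argue by contradiction via a family-averaged form of the Weil explicit formula and substitute the refinement of Theorem~\ref{theorem average n_chi} for Murty's bound \eqref{Murty} in the resulting inequality. First I would suppose for contradiction that every non-principal $\chi\bmod q$ satisfies
\[
\frac{|\gamma_\chi|\log q}{2\pi}>\beta:=\frac{1}{4}-\frac{\log\log q}{4\log q}+K\frac{\log\log\log q}{\log q}
\]
for some sufficiently large absolute constant $K$ to be chosen at the end. Since the central zero is counted in $|\gamma_\chi|$, this hypothesis forces $n_\chi=0$ for every $\chi\neq\chi_0$ and leaves no rescaled non-trivial zero of $L(s,\chi)$ inside $[-\beta,\beta]$.

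Next, I would choose a smooth, even, nonnegative $\phi$ with $\supp\phi\subseteq[-\beta,\beta]$ and $\phi(0)=1$, taken close enough to $\mathbbm 1_{[-\beta,\beta]}$ that $\int_{-\infty}^\infty \phi(x)\,dx\geq 2\beta-O(\log\log\log q/\log q)$ while $\|\wh{\phi}\|_\infty$ stays $O(1)$. Applying the explicit formula from \S\ref{section: preliminaries} to each $L(s,\chi)$ with test function $x\mapsto\phi(x\log q/(2\pi))$ and summing over $\chi\neq\chi_0$, the orthogonality relation
\[
\sum_{\chi\neq\chi_0}\chi(n)=\phi(q)\,\mathbbm 1_{n\equiv 1\bmod q,\,(n,q)=1}-\mathbbm 1_{(n,q)=1}
\]
together with the observation that $n\equiv 1\pmod q$ with $n\geq 2$ forces $n\geq q+1$ reduces the averaged prime side to $O(1/\log q)$; the gamma-factor terms contribute the same order, and I would obtain
\[
\frac{1}{\phi(q)-1}\sum_{\chi\neq\chi_0}\sum_{\gamma_\chi}\phi\Big(\frac{\gamma_\chi\log q}{2\pi}\Big)=\int_{-\infty}^\infty\phi(x)\,dx+O\Big(\frac{1}{\log q}\Big).
\]

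By the contradiction hypothesis the left-hand side equals $\phi(0)\cdot(\phi(q)-1)^{-1}\sum_{\chi\neq\chi_0}n_\chi$; feeding Theorem~\ref{theorem average n_chi} and the chosen $\phi$ into this identity then yields
\[
2\beta-O\Big(\frac{\log\log\log q}{\log q}\Big)\leq\frac{1}{2}-\frac{\log\log q}{2\log q}+O\Big(\frac{\log\log\log q}{\log q}\Big),
\]
which contradicts the definition of $\beta$ once $K$ is taken sufficiently large. The main obstacle I anticipate is the joint optimization of $\phi$: I need $\int\phi/\phi(0)$ to lie within $O(\log\log\log q/\log q)$ of the indicator ratio $2\beta$, and simultaneously the Fourier decay of $\phi$ must be strong enough that the averaged prime side stays $O(1/\log q)$ rather than something larger after orthogonality. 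A carefully scaled mollification of $\mathbbm 1_{[-\beta,\beta]}$ should suffice, but the error analysis has to be carried out with uniformity in $q$.
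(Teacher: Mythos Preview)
There is a genuine gap at the heart of your plan: the test function you describe cannot exist. You want $\phi$ to be compactly supported in $[-\beta,\beta]$ on the \emph{zero side} (so that, under the contradiction hypothesis, $\sum_{\gamma_\chi}\phi(\gamma_\chi\log q/(2\pi))$ collapses to the contribution of $\gamma_\chi=0$), and at the same time you want the \emph{prime side} to truncate at $n\leq q$ so that orthogonality kills it. But the prime side involves the Fourier transform of the zero-side function; by Paley--Wiener, a nonzero $\phi$ with $\supp\phi\subseteq[-\beta,\beta]$ has $\wh{\phi}$ entire and of exponential type, never compactly supported. Concretely: if the test function fed into Weil's formula~\eqref{explicit formula} is $F(x)=\phi(x\log q/(2\pi))$ as you write, then the zero side is $\sum_\gamma \wh F(\gamma)=\tfrac{2\pi}{\log q}\sum_\gamma\wh{\phi}(2\pi\gamma/\log q)$, and $\wh{\phi}$ is not supported in $[-\beta,\beta]$, so the hypothesis ``no zeros with $|\gamma_\chi\log q/(2\pi)|\leq\beta$'' gives you nothing. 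Conversely, if you arrange the zero side to read $\sum_\gamma\phi(\gamma_\chi\log q/(2\pi))$, then the time-side function is (a rescaling of) $\wh{\phi}$, which has only Schwartz decay, so $\sum_n F(\log n)\Lambda(n)/\sqrt n$ diverges and admissibility condition~(b) of \S\ref{section: preliminaries} fails outright. No ``carefully scaled mollification of $\mathbbm 1_{[-\beta,\beta]}$'' can dodge this: the obstruction is the uncertainty principle, not a quantitative optimization.

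The workable substitute is precisely the positivity technique the paper uses: take the zero-side function to be $\leq 0$ (rather than $=0$) outside $[-\beta,\beta]$, with Fourier transform compactly supported. This is what $F^\alpha$ does. The paper then sets $T=\sqrt{\tfrac{\alpha+1}{\alpha-1}}\pi/|\gamma_q|$ with $|\gamma_q|=\min_{\chi\neq\chi_0}|\gamma_\chi|$, so that $\sum_\rho\Phi(F^\alpha_T)(\rho)\leq 0$ for every $\chi$, averages \eqref{explicit formula} over $\chi\neq\chi_0$, and controls the prime side via Lemma~\ref{lemma: sum over p=1 mod q} rather than by assuming a hard truncation. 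Solving the resulting inequality $\log q\ll \alpha(\log\log q)e^{T/2}/(qT)$ with $\alpha\asymp\log q$ forces $T/2\geq \log q+\log\log q-\log\log\log q$, which unwinds to the stated bound. Note also that in your outline, once the contradiction hypothesis gives $n_\chi=0$ for all $\chi$, the appeal to Theorem~\ref{theorem average n_chi} is superfluous: the left-hand side of your displayed identity would already be $0$, not merely $\leq\tfrac12$.
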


With a little extra work we have removed the restriction to prime moduli from \eqref{Hughes Rudnick min gamma_chi} (and from \eqref{Hughes Rudnick proportion}; see Theorem~\ref{theorem: proportion min gamma_q} below). A similar estimate holds for $|\wt{\gamma_\chi}|$ with $1/4$ replaced by $1/2$ on the right-hand side, but the details will be omitted.

In the other direction, we prove the following result on large heights of the first zeros: 
\begin{theorem}\label{theorem: max gamma_chi}
    Assuming GRH, 
    \[
    \max_{\substack{\chi\bmod q\\\chi\neq \chi_0}} \frac{|\gamma_\chi|\log q}{2\pi}\geq \frac{1}{4}+\frac{\gamma_E+\log 8\pi}{4\log q}+O\left(\frac{1}{(\log q)^2}\right)
    \]
    where $\gamma_E=0.5772\ldots$ is Euler's constant.
\end{theorem}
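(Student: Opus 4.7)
The plan is to apply the positivity technique as the natural dual to Hughes and Rudnick's proof of \eqref{Hughes Rudnick min gamma_chi}: rather than using a minorant of $\mathbf{1}_{[-h,h]}$ to force some $\chi$ to have a zero in a small interval, I use a majorant of $\mathbf{1}_{[-h,h]}$ to force some $\chi$ to have no zero there. Concretely, I choose $F$ to be an entire, nonnegative function on $\mathbb{R}$ with $F(x) \geq \mathbf{1}_{[-h,h]}(x)$ and $\wh{F}$ supported in $[-2,2]$---for instance a Beurling--Selberg type extremal majorant. If every $\chi \neq \chi_0 \bmod q$ were to satisfy $|\gamma_\chi|\log q/(2\pi) \leq h$, then each $\chi$ would have a zero $\rho$ with $\gamma_\rho \log q/(2\pi) \in [-h,h]$, which by the majorant property forces $\sum_\rho F(\gamma_\rho \log q/(2\pi)) \geq 1$. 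Averaging over non-principal $\chi$ would give the average sum $\geq 1$.

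I would then apply Weil's explicit formula to each non-principal $\chi$ and average over the family, obtaining this quantity as $\wh{F}(0)$ plus a correction of order $1/\log q$. The archimedean part averages to $\wh{F}(0)\bigl[1 + (-\log\pi + \overline{\Re\psi(1/4 + a_\chi/2)})/\log q\bigr] + O((\log q)^{-2})$. Since non-principal characters mod $q$ are equinumerously even and odd up to $O(1)$, the parity average is $(\psi(1/4)+\psi(3/4))/2 + O(1/\phi(q)) = -\gamma - 3\log 2 + O(1/\phi(q))$, which combines with $-\log\pi$ into $-(\gamma + \log 8\pi)$. The averaged prime sum is handled via the orthogonality relation $\sum_{\chi \neq \chi_0}\chi(n)$, which isolates contributions from $n \equiv 1 \pmod q$ with $n \leq q^2$ thanks to the support condition on $\wh{F}$, and is controlled using GRH in arithmetic progressions.

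Combining these pieces, I expect the averaged sum to take the form $(2h + \tfrac{1}{2})\bigl(1 - c_0/\log q\bigr) + O((\log q)^{-2})$ for an explicit $c_0$; forcing this quantity to be strictly less than $1$ yields $h < \tfrac{1}{4} + \tfrac{\gamma + \log 8\pi}{4\log q} + O((\log q)^{-2})$, after which the contrapositive gives Theorem~\ref{theorem: max gamma_chi}. The main obstacle will be pinning down the precise value of $c_0$, which requires evaluating the averaged prime sum to its first non-trivial order. Because the Fourier support of $F$ reaches the critical threshold $\pm 2$, the contribution from primes $n \in (q, q^2]$ with $n \equiv 1 \pmod q$ is borderline and, via GRH-smoothed partial summation against $\wh{F}$, contributes a term of the same order $1/\log q$ as the Gamma-factor correction. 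Their precise sum is what produces the specific constant $(\gamma + \log 8\pi)/4$ in the theorem.
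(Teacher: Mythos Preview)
Your overall strategy---show via the explicit formula that the average over $\chi\neq\chi_0$ of a nonnegative majorant of the zero-indicator is strictly less than $1$, hence some $\chi$ has no zero in the window---is precisely the paper's. The paper, however, does not use a Beurling--Selberg majorant but the specific function $K=F^{1}$ from \eqref{def K}: one takes $T=2\log q$, uses $\wh K\geq 0$ to bound the averaged zero count by $(\text{RHS of \eqref{explicit formula}})/(T\min_{|t|\le 4\pi\beta}\wh K(t))$, and the factor $1/4$ in the final constant comes from the Taylor expansion of $\wh K$ at its critical point $t=\pi$, where $\wh K(\pi)=1/2$.

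The substantive gap in your proposal is the role you assign to the prime sum. In the paper's argument the averaged prime contribution is \emph{not} of order $1/\log q$: because $K(x)\le 4(1-|x|)^{3}$ vanishes cubically at the support endpoints, the averaged prime sum is $O\bigl((\log\log q)(\log q)^{-3}\bigr)$ and is swallowed by the error. The constant $\gamma+\log 8\pi$ comes \emph{entirely} from the archimedean side, namely from $\log(q/\pi)$ together with the parity-averaged $I_\chi(K_T)=\gamma+3\log 2+o(1)$. Your assertion that the prime sum is ``borderline'' of order $1/\log q$ and then conspires with the Gamma contribution to produce exactly $(\gamma+\log 8\pi)/4$ is unsubstantiated, and your own algebra does not close: if the averaged sum really were $(2h+\tfrac12)(1-c_0/\log q)+O((\log q)^{-2})$, then setting this $<1$ gives $h<\tfrac14+c_0/(2\log q)$, not $c_0/(4\log q)$. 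So either the Beurling--Selberg choice yields a \emph{better} secondary constant than the paper states (which is fine, but not what you claim), or its prime sum is genuinely $\Theta(1/\log q)$ and must be computed explicitly---neither of which you have carried out. The paper sidesteps the whole issue by choosing $K$ so that the prime contribution is negligible.
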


Finally, regarding the proportion of characters with small zeros, we improve Hughes and Rudnick's result \eqref{Hughes Rudnick proportion} for all $1/2<\beta<\infty$. The number $1/2$ here is best possible for this method, but can be further improved to $1/4$ via a different method (see \cite{ZhaoBLMS}).

\begin{theorem}\label{theorem: proportion min gamma_q}
    Let $\alpha_0=1.8652\ldots$ be the unique number in $(1,\infty)$ at which the function $f(\alpha)=\frac{(6\alpha^2+\pi^2-3)(\alpha+1)}{12\pi^2(\alpha-1)}$ is minimized. Assuming GRH, 
    \begin{multline*}
        \liminf_{\substack{q\to \infty}}\frac{1}{\phi(q)-1}\# \left\{\chi\neq \chi_0: \frac{|\gamma_\chi| \log q}{2\pi}<\beta\right\} \\ \geq
        \begin{cases}
        \dfrac{1}{1+\dfrac{3+\pi^2+72\beta^2-8\pi^2\beta^2+48\beta^4+16\pi^2\beta^4}{12\pi^2(4\beta^2-1)^2}}, & 1/2 <\beta<\beta_0,\\
        \dfrac{1}{1+\dfrac{f(\alpha_0)}{4\beta^2}}, & \beta\geq \beta_0
        \end{cases}
    \end{multline*}
    where $\beta_0=\sqrt{\frac{\alpha_0+1}{\alpha_0-1}}/2=0.9098\ldots$ and $f(\alpha_0)=0.7757\ldots$. 
\end{theorem}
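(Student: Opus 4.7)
The plan is to sharpen the argument behind \eqref{Hughes Rudnick proportion} in two complementary ways: first, by replacing Chebyshev's inequality with a Cauchy-Schwarz (Paley-Zygmund) inequality, and second, by optimizing over a more flexible family of test functions in the large-$\beta$ regime.

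For a non-negative test function $\phi$ with $\supp(\wh{\phi}) \subseteq [-2, 2]$, form the linear statistic $X_\chi := \sum_{\gamma_\chi} \phi(\gamma_\chi \log q/(2\pi))$ and select $\phi$ so that $X_\chi > 0$ forces the existence of a zero $\gamma_\chi$ of $L(s,\chi)$ with $|\gamma_\chi|\log q/(2\pi) < \beta$. Under GRH, the Hughes-Rudnick one-level density together with its associated variance computation give, as $q\to\infty$ through primes,
\[
    \frac{1}{q-2}\sum_{\chi \neq \chi_0} X_\chi = \wh\phi(0) + o(1),\qquad \frac{1}{q-2}\sum_{\chi \neq \chi_0} X_\chi^2 = \wh\phi(0)^2 + V(\phi) + o(1),
\]
where $V(\phi)$ denotes the variance of the unitary linear statistic associated to $\phi$. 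The Cauchy-Schwarz inequality $\bigl(\sum_\chi X_\chi\bigr)^2 \leq \#\{\chi: X_\chi > 0\}\cdot \sum_\chi X_\chi^2$ then yields
\[
    \frac{\#\{\chi \neq \chi_0: X_\chi > 0\}}{q-2} \geq \frac{1}{1 + V(\phi)/\wh\phi(0)^2} + o(1),
\]
upgrading the Chebyshev-type bound $1 - V(\phi)/\wh\phi(0)^2$ implicit in \eqref{Hughes Rudnick proportion} to the $1/(1+\cdot)$ shape on the right-hand side of the theorem.

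It remains to optimize the test function. I would work with a family $\{\phi_{\alpha,\beta}\}$ parametrized by $\alpha>1$, with the Hughes-Rudnick function recovered at $\alpha = \alpha(\beta) := (4\beta^2 + 1)/(4\beta^2 - 1)$, engineered so that $V(\phi_{\alpha,\beta})/\wh{\phi_{\alpha,\beta}}(0)^2 = f(\alpha)/(4\beta^2)$ for all admissible pairs $(\alpha, \beta)$. A Paley-Wiener type consideration should show that the detection constraint ``$X_\chi > 0 \Rightarrow |\gamma_\chi|\log q/(2\pi) < \beta$'' is equivalent to $\alpha \geq \alpha(\beta)$. Since $\alpha(\beta)=1+\tfrac{2}{4\beta^2-1}$ is strictly decreasing in $\beta$ with $\alpha(\beta_0) = \alpha_0$, for $\beta < \beta_0$ the constraint is binding and we must take $\alpha = \alpha(\beta)$; combined with the elementary algebraic identity $f(\alpha(\beta))/(4\beta^2) = V_1(\beta)$, where $V_1(\beta)$ is the fraction in the first case of the theorem, this recovers the first case. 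For $\beta \geq \beta_0$ the constraint is slack, so $\alpha$ is free and we take $\alpha = \alpha_0$, the unique minimizer of $f$ on $(1,\infty)$, yielding the second case.

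The principal technical obstacle is the explicit construction of the family $\{\phi_{\alpha,\beta}\}$ (plausibly obtained from tailored convolutions or convex combinations of Fej\'er-type kernels, perhaps after a re-scaling $x\mapsto x/\beta$) and the verification of the variance identity $V(\phi_{\alpha,\beta})/\wh{\phi_{\alpha,\beta}}(0)^2 = f(\alpha)/(4\beta^2)$ together with the precise formulation of the detection constraint giving $\alpha \geq \alpha(\beta)$. The remaining single-variable optimization $\min_{\alpha > 1} f(\alpha)$ is then a routine calculus exercise locating $\alpha_0 = 1.8652\ldots$.
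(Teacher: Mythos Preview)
Your overall strategy---apply Cauchy--Schwarz to the linear statistic and then optimize over a one-parameter family of test functions---is exactly the paper's argument, and the target inequality $\#\mc{Q}_\beta/(q-2) \gtrsim (1 + f(\alpha)/(4\beta^2))^{-1}$ is the right endpoint. But your setup has an internal inconsistency: you ask for a \emph{non-negative} $\phi$ with $\supp\wh\phi$ compact and the detection property ``$X_\chi>0 \Rightarrow$ zero in $(-\beta,\beta)$''. If $\phi\geq 0$, the detection property forces $\phi$ to vanish outside $[-\beta,\beta]$, and a nonzero function cannot have both itself and its Fourier transform compactly supported. Relatedly, the inequality $(\sum_\chi X_\chi)^2 \leq \#\{\chi: X_\chi>0\}\cdot \sum_\chi X_\chi^2$ you quote is the Paley--Zygmund form, valid only when each $X_\chi\geq 0$; once non-negativity of $\phi$ is dropped it fails in general.

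The paper repairs this by requiring instead that $\phi(x)\leq 0$ for $|x|\geq \beta$---concretely, $\phi(\cdot)=T\wh{F^\alpha}(2\pi T\,\cdot/\log q)$ with $F^\alpha$ the explicit family \eqref{def F^alpha} and $T=\sqrt{\tfrac{\alpha+1}{\alpha-1}}\,\log q/(2\beta)$, so your ``principal technical obstacle'' is already solved. Then $\chi\notin\mc{Q}_\beta$ implies $X_\chi\leq 0$, whence $\sum_{\chi\in\mc{Q}_\beta}X_\chi\geq \sum_{\chi\neq\chi_0}X_\chi$, and Cauchy--Schwarz applied \emph{over the set $\mc{Q}_\beta$} (not over $\{X_\chi>0\}$) gives $\#\mc{Q}_\beta \geq (\sum_\chi X_\chi)^2/\sum_\chi X_\chi^2$. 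Note also that the constraint $\alpha\geq \alpha(\beta)=(4\beta^2+1)/(4\beta^2-1)$ does not come from the detection property (which holds for every $\alpha>1$ by the choice of $T$) but from the support condition $T\leq \log q$, needed so that $n_1\equiv n_2\bmod q$ with $n_1,n_2\leq e^T$ forces $n_1=n_2$ in the diagonal second-moment computation. With this $F^\alpha$ one computes $\sigma(F^\alpha)=\int_{-1}^1|u|F^\alpha(u)^2\,du=(6\alpha^2+\pi^2-3)/(12\pi^2)$, so the ratio is exactly $\tfrac{1}{4\beta^2}\cdot\tfrac{\alpha+1}{\alpha-1}\,\sigma(F^\alpha)=f(\alpha)/(4\beta^2)$, after which your optimization over $\alpha$ goes through verbatim.
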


In particular, the conditional lower bound we provide grows at a rate of $1-O(\beta^{-2})$, which indeed converges to 1 as $\beta\to \infty$, agreeing with and refining Selberg's unconditional result \cite[Theorem 9]{Sel} mentioned at the end of \S\ref{subsection: family mod q}. A similar estimate for the family of quadratic Dirichlet $L$-functions is obtained in Theorem~\ref{theorem proportion quadratic}. For the proofs, we pursue a cardinality argument based on the Cauchy\textendash Schwarz inequality instead of using Chebyshev's inequality as in \cite{HuRu}, and also allow for more flexibility with the support of the chosen test function as $\beta$ varies (see \S\ref{subsection: proportion} for details).

\begin{figure}[ht]
    \centering
    \includegraphics[width=11cm]{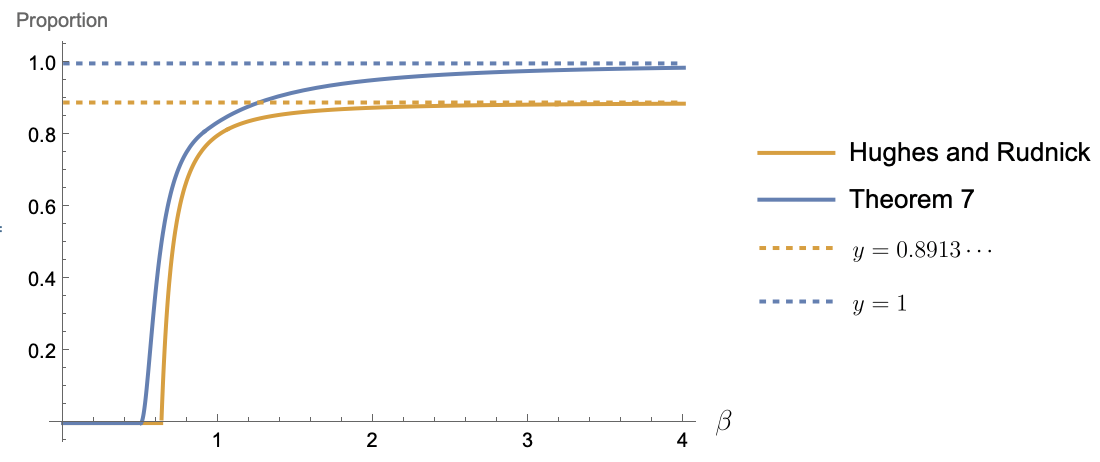}
    \caption{A plot illustrating the lower bounds provided by \eqref{Hughes Rudnick proportion} and Theorem~\ref{theorem: proportion min gamma_q} for small $\beta$.}
    \label{Figure_Proportion_mod_q}
\end{figure}

\section{Preliminaries and lemmas}\label{section: preliminaries}
\subsection{Weil's explicit formula}
We begin by stating the version of Weil's explicit formula formulated by Mestre \cite{Mestre} (see also the work of Barner \cite{Bar}) and adapted for $L(s,\chi)$. We say that a function $F:\mathbb{R}\to \mathbb{R}$ is \textit{admissible} if it is even and satisfies $F(0)=1$ along with the following properties:
\begin{enumerate}[label=(\alph*)]
    \item There exists $\epsilon>0$ such that $F(x)e^{-(1/2+\epsilon)x}$ is of bounded variation, and $F(x)$ is ``normalized" in the sense that $F(x)=\frac{1}{2}(F(x+0)+F(x-0))$ for all $x$.
    \item $(F(x)-1)/x$ is of bounded variation.
\end{enumerate}
Put
\begin{equation}\label{Phi(F)}
    \Phi(F)(s):=\int_{-\infty}^\infty F(x)e^{(s-1/2)x}\md x.
\end{equation}
Then we have the following:
\begin{theorem}[Weil]\label{Weil}
    For any admissible function $F$,
    \begin{equation}\label{explicit formula}
        \sum_\rho \Phi(F)(\rho)=\log \frac{q}{\pi}-I_\chi(F)-J_\chi(F),
    \end{equation}
    where the sum on the left-hand side runs over the non-trivial zeros of $L(s,\chi)$ with $0<\beta<1$ (counted with multiplicity) and
    \begin{align*}
        I_\chi(F)&:=\int_0^\infty \left(\frac{F(x/2)e^{-(1/4+\delta_\chi/2)x}}{1-e^{-x}}-\frac{e^{-x}}{x}\right)\md x, \qquad \delta_\chi=\frac{1-\chi(-1)}{2},\\ J_\chi(F)&:=2\sum_{n=1}^\infty\Re(\chi(n))F(\log n)\frac{\Lambda(n)}{\sqrt{n}}.
    \end{align*}
\end{theorem}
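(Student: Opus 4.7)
The plan is to derive Weil's formula by contour integration of $\Phi(F)(s)$ against the logarithmic derivative of $L(s,\chi)$, using the functional equation to convert the contribution from the left half-plane back to an arithmetic sum.

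First, I would note that since $q>1$ and $\chi$ is primitive, $\chi$ is non-principal and $L(s,\chi)$ is entire. The completed $L$-function $\xi(s,\chi)=(q/\pi)^{(s+\delta_\chi)/2}\Gamma((s+\delta_\chi)/2)L(s,\chi)$ is then entire of order $1$ and satisfies $\xi(s,\chi)=\varepsilon(\chi)\xi(1-s,\ov\chi)$ for some $\varepsilon(\chi)$ of modulus $1$; logarithmic differentiation gives
\[
-\frac{L'}{L}(s,\chi)=\log\frac{q}{\pi}+\frac{1}{2}\frac{\Gamma'}{\Gamma}\!\left(\frac{s+\delta_\chi}{2}\right)+\frac{1}{2}\frac{\Gamma'}{\Gamma}\!\left(\frac{1-s+\delta_\chi}{2}\right)+\frac{L'}{L}(1-s,\ov\chi).
\]
The Euler product supplies $-L'/L(s,\chi)=\sum_n\chi(n)\Lambda(n)n^{-s}$ absolutely for $\Re s>1$.

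The central step is to evaluate $I:=\tfrac{1}{2\pi i}\int_{(c)}\Phi(F)(s)\bigl(-\tfrac{L'}{L}(s,\chi)\bigr)\,ds$ for some $c>1$ in two ways. Inserting the Dirichlet series and exchanging order of integration (justified by absolute convergence on the line together with decay of $\Phi(F)$ in vertical strips) produces the identity $\tfrac{1}{2\pi i}\int_{(c)}\Phi(F)(s)n^{-s}\,ds=F(\log n)/\sqrt n$, whence $I=\sum_n\chi(n)\Lambda(n)F(\log n)/\sqrt n$. On the other hand, shifting the contour to $\Re s=1-c$ sweeps past every non-trivial zero of $L(s,\chi)$ and picks up residues totalling $-\sum_\rho\Phi(F)(\rho)$ (counted with multiplicity); on the shifted line I would invoke the functional-equation identity above, together with the substitution $s\mapsto 1-s$ and the symmetry $\Phi(F)(1-s)=\Phi(F)(s)$ (which follows from $F$ being even), in order to transform the $L'/L(1-s,\ov\chi)$ piece into a copy of the original arithmetic sum for $\ov\chi$. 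Adding the two complex-conjugate contributions then produces the factor $2\Re\chi(n)$ in the statement, while the constant $\log(q/\pi)$ integrates against $F(0)=1$ to yield the $\log(q/\pi)$ term.

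It remains to identify the digamma-integral remainder with $I_\chi(F)$. Here I would substitute Binet's representation $\tfrac{\Gamma'}{\Gamma}(z)=\int_0^\infty\bigl(e^{-t}/t-e^{-zt}/(1-e^{-t})\bigr)\,dt$ for $\Re z>0$, interchange the $t$- and $s$-integrations, and apply the same Mellin localisation used above. A change of variable $x=t/2$ absorbs the $\delta_\chi$-shift in the gamma factor and the integrand collapses exactly to the one displayed in the definition of $I_\chi(F)$, with the subtracted $-e^{-x}/x$ cancelling the logarithmic singularity at the origin. The main obstacle is justifying the contour shift, the convergence of $\sum_\rho\Phi(F)(\rho)$, and the various interchanges of sums and integrals under only the weak decay $F,F'=O(e^{-(1/2+\epsilon)|x|})$; classical proofs in the style of Titchmarsh demand considerably stronger decay. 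This is precisely the delicate point addressed by Barner, whose argument rests on sharp estimates for $\Phi(F)$ in vertical strips obtained by integration by parts across the finitely many jumps of $F'$ permitted by condition (a), combined with a careful truncation argument letting the height of the contour tend to infinity.
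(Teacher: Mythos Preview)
The paper does not give its own proof of this theorem: it simply states it as the Barner formulation of Weil's explicit formula, with a citation to \cite{Bar}, and then moves on. So there is no proof in the paper to compare against.

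Your sketch is a reasonable outline of the classical contour-integration argument (Mellin inversion of $\Phi(F)$ against $-L'/L$, shift of contour past the zeros, use of the functional equation on the far line, and Binet's integral for the digamma terms), and you correctly flag the genuine analytic difficulty---justifying the contour shift and the convergence of $\sum_\rho\Phi(F)(\rho)$ under the weak hypotheses (a)--(b)---as exactly the point that Barner's paper handles. Since the paper itself defers entirely to that reference, your proposal is in the same spirit and at least as detailed as what the paper provides.
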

As usual, $\Lambda(n)$ denotes the von Mangoldt function that takes on the value $\log p$ when $n=p^m$ is a prime power and 0 otherwise. 

\subsection{Overview of the positivity technique}
Note that if $\rho=\frac{1}{2}+i\gamma$, then $\Phi(F)(\rho)=\wh{F}(\frac{\gamma}{2\pi})$ and $\Phi(F_T)(\rho)=T\wh F(\frac{T\gamma}{2\pi})$ where $F_T(x):=F(x/T)$. We assume GRH henceforth, so that every non-trivial zero is of this form. 

To see how to bound $|\gamma_\chi|$ from above, suppose that $|\gamma_\chi|\neq 0$ and that $\wh{F}(t)\leq 0$ for $|t|\geq t_0$. Putting $T=2\pi t_0/|\gamma_\chi|$, we have $\Phi(F_T)(\rho)\leq 0$ for each $\rho$, which implies that 
\begin{equation}\label{positivity technique F}
    I_\chi(F_T)+J_\chi(F_T)\geq \log \frac{q}{\pi}
\end{equation}
in view of \eqref{explicit formula}. If we can bound these two quantities from above in terms of $T$ (for practical reasons we also require $F$, and hence $F_T$, to be compactly supported), then for sufficiently large $q$ this will lead to a lower bound on $T$ in terms of $q$, and hence an upper bound on $|\gamma_\chi|$.

The treatment for $|\wt{\gamma_\chi}|$ is similar, but we get a weaker bound because the zeros at $\frac{1}{2}$ make a positive contribution to the sum if $\wh{F}(0)>0$. In order to make this precise we first need to derive an upper bound on $n_\chi$. To this end, we apply \eqref{explicit formula} to $H_T$ where $H$ has non-negative Fourier transform $\wh{H}$ and $T$ is a free parameter. By dropping the contributions from all zeros $\rho\neq \frac{1}{2}$, we see that the left-hand side is at least $n_\chi\wh{H_T}(0)=n_\chi\wh{H}(0)T$, and therefore
\begin{equation}\label{positivity technique H}
n_\chi\wh{H}(0)T\leq \log \frac{q}{\pi}-I_\chi(H_T)-J_\chi(H_T).
\end{equation}
Our task reduces to bounding the right-hand side from above and choosing the optimal $T$ that minimizes $n_\chi$. 

Finally, when considering the family of all non-principal $\chi$ mod $q$ simultaneously, we sum both sides of \eqref{explicit formula} over $\chi$ and exploit the orthogonality of characters instead of trivially bounding each sum $J_\chi$ by the triangle inequality. 

\subsection{Lemmas}
We shall use the following admissible function to derive upper bounds on $|\gamma_\chi|$, for which we require that $\wh{F}(t)\leq 0$ outside of some bounded interval \footnote{See the proof of \cite[Theorem 8.1]{HuRu} for a general way of constructing such functions.}:

\begin{equation}\label{def F^alpha}
    F^\alpha(x)=
    \begin{cases}
        (1-|x|)\cos(\pi x)+\dfrac{\alpha}{\pi}\sin(\pi |x|) & \text{if\:\:$x\in [-1,1]$},\\
        0 & \text{otherwise}
    \end{cases}
\end{equation}
where $\alpha>1$. (Omar fixes $\alpha=3$ in \cite{Oma1}, while our $\alpha$ will be a  parameter depending on $q$.) One can readily verify that 
\begin{equation}
    \wh{F^\alpha}(t)=\left((\alpha+1)-4(\alpha-1)t^2\right)\left(\frac{2\cos(\pi t)}{\pi(1-4t^2)}\right)^2,
\end{equation}
so that $\wh{F^\alpha}(t)\leq 0$ for $|t|\geq \sqrt{\frac{\alpha+1}{\alpha-1}}/2$. We now introduce two lemmas concerning the terms $I_\chi$ and $J_\chi$ that appear in the explicit formula \eqref{explicit formula} when applied to $F^\alpha$. The proofs can easily be adapted for other test functions that will appear later.

\begin{lemma}\label{Lemma I_delta}
    \[
    |I_\chi(F^\alpha_T)|\ll \frac{\alpha}{T}+1,
    \]
    where the implied constant is absolute.
\end{lemma}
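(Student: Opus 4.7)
The plan is to exploit the compact support of $F^\alpha$ and the delicate cancellation between the two singular pieces of the integrand at $x=0$. Since $F^\alpha$ vanishes outside $[-1,1]$, the function $F^\alpha_T(x/2) = F^\alpha(x/(2T))$ is supported in $[-2T,2T]$, so one can write
\begin{equation*}
I_\chi(F^\alpha_T) = \int_0^{2T}\!\!\left(\frac{F^\alpha(x/(2T))\,e^{-(1/4+\delta_\chi/2)x}}{1-e^{-x}} - \frac{e^{-x}}{x}\right)\md x \;-\; \int_{2T}^\infty \frac{e^{-x}}{x}\md x.
\end{equation*}
The tail integral is $O(1)$ once $T \geq 1/2$, and for smaller $T$ is at most $O(\log(1/T))$, which is absorbed by $O(\alpha/T)$.

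Assuming $T \geq 1$ (the range $T<1$ is handled by a direct crude estimate on the short interval $[0,2T]$ after cancelling the $1/x$ singularities), I would split the main integral at $x=1$. On $[0,1]$ both terms in the integrand have a $1/x$ singularity that cancels since $F^\alpha(0) = 1$. Using the Lipschitz estimate $F^\alpha(x/(2T)) = 1 + O(\alpha x/T)$ (the Lipschitz constant of $F^\alpha$ on $[-1,1]$ is $O(\alpha)$), together with $\frac{1}{1-e^{-x}} = \frac{1}{x} + O(1)$ and $e^{-(1/4+\delta_\chi/2)x} = 1 + O(x)$, a direct recombination shows the integrand is pointwise $O(1 + \alpha/T)$ on $[0,1]$, contributing $O(1 + \alpha/T)$ to the integral.

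On $[1,2T]$, the $-e^{-x}/x$ piece is trivially $O(1)$. For the main piece I would split $F^\alpha(y) = (1-|y|)\cos(\pi y) + (\alpha/\pi)\sin(\pi|y|)$. The $\alpha$-independent part is bounded by $1$ and contributes $O\bigl((1-e^{-1})^{-1}\int_1^\infty e^{-x/4}\md x\bigr) = O(1)$. The key observation is that the $\alpha$-dependent part vanishes at $y=0$: the bound $|\sin(\pi y)| \leq \pi y$ pulls out a factor of $x/(2T)$, giving
\begin{equation*}
\frac{\alpha}{\pi}\int_1^{2T} \frac{|\sin(\pi x/(2T))|\,e^{-x/4}}{1-e^{-x}}\md x \;\leq\; \frac{\alpha}{2T(1-e^{-1})}\int_0^\infty x\, e^{-x/4}\md x \;=\; O(\alpha/T).
\end{equation*}
Summing all contributions yields $|I_\chi(F^\alpha_T)| \ll \alpha/T + 1$ with an absolute implied constant. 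The only place that requires any care is the cancellation on $[0,1]$ with the correct $\alpha$-dependence, but this is clean once one notes that the $\alpha$-dependent summand of $F^\alpha$ vanishes at the origin, so all $\alpha$-sensitive terms come paired with an extra power of $x/T$.
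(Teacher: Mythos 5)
Your proof is correct, and the route is genuinely different in its details from the paper's. The paper normalizes by subtracting $1$ from $F^\alpha_T(x/2)$ inside the singular factor, so that the main integral splits into three pieces: a piece involving $F^\alpha_T(x/2)-1$ (which, after a double application of the mean value theorem, is seen to be $O(\alpha/T)$ since $|F^{\alpha\prime}|\ll\alpha$), the Digamma integral $\int_0^\infty\bigl(\frac{e^{-(1/4+\delta_\chi/2)x}}{1-e^{-x}}-\frac{e^{-x}}{x}\bigr)\md x=\frac{\Gamma'}{\Gamma}(\frac14+\frac{\delta_\chi}{2})$, evaluated exactly by Gauss' theorem, and an $O(1)$ tail. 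You instead keep the original decomposition, split at $x=1$, cancel the $1/x$ singularity on $[0,1]$ by elementary Taylor/asymptotic expansion, and on $[1,2T]$ separate $F^\alpha$ into its $\alpha$-independent part (bounded by $1$) and its $\alpha$-dependent part, extracting the $x/T$ factor from $|\sin(\pi y)|\leq\pi y$ rather than from $F^{\alpha\prime}$. Both arguments hinge on the same structural fact --- the $\alpha$-sensitive contribution vanishes at the origin, so it always comes paired with an extra factor of $x/(2T)$ --- but yours avoids the Digamma theorem and the somewhat delicate first-mean-value-theorem-for-integrals step, making it a bit more self-contained. The trade-off is that the paper's version directly yields the explicit constant $\gamma+3\log 2\pm\pi/2$, which it reuses when making the bound fully explicit in \S\ref{section: explicit results} (see \eqref{explicit Lemma 5 and 6}); your approach would require extra work to recover those constants. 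Also, you should state up front (as the lemma does implicitly via the definition \eqref{def F^alpha}) that $\alpha>1$, so that the $O(\log(1/T))$ and $O(T+\alpha)$ terms in your small-$T$ case are indeed $\ll\alpha/T$.
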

\begin{proof}
    Since $F^\alpha_T(x)=F^\alpha(x/T)$ is compactly supported in $[-T,T]$, 
    \begin{multline*}                           
        I_\chi(F^\alpha_T)=\int_0^{2T}\frac{F^\alpha_T(x/2)-1}{1-e^{-x}}e^{-(1/4+\delta_\chi/2)x} \md x+\int_0^\infty \left(\frac{e^{-(1/4+\delta_\chi/2)x}}{1-e^{-x}}-\frac{e^{-x}}{x}\right) \md x\\
        -\int_{2T}^\infty  \frac{e^{-(1/4+\delta_\chi/2)x}}{1-e^{-x}} \md x
    \end{multline*}
    where $\delta_\chi=0$ or $1$ according as $\chi$ is even or odd. Also recall that $F^\alpha_T(0)=1$, so by the mean value theorem there exists $x^*\in (0,1)$ such that the first integral can be rewritten as
    \[
    \frac{1}{2T}F^{\alpha \prime}(x^*)\int_0^{2T} \frac{x e^{-(1/4+\delta_\chi/2)x}}{1-e^{-x}} \md x.
    \]
    Note that $|F^{\alpha \prime}(x)|\ll \alpha$ on $(0,1)$ and that the integral above converges. By Gauss's digamma theorem the second integral equals
    \[
    \frac{\Gamma'}{\Gamma}\left(\frac{1}{4}+\frac{\delta_\chi}{2}\right)=
    \begin{cases}
        \gamma_E+3\log 2+\pi/2 & \text{if $\delta_\chi=0$},\\
        \gamma_E+3\log 2-\pi/2 & \text{if $\delta_\chi=1$}.
    \end{cases}
    \]
    The third integral is plainly $O(1)$, and the proof is therefore complete.
\end{proof}

\begin{lemma}\label{Lemma: sum over p}
For $\alpha\geq 3$ and large $T$,
    \begin{equation}\label{equation sum over p}
        \left|J_\chi(F^\alpha_T)\right| \leq \left(4+O(T^{-1})\right)(\alpha-1)\frac{e^{T/2}}{T}.
    \end{equation}
\end{lemma}

\begin{proof}
    We claim that when $\alpha\geq 3$, 
    \begin{equation}\label{bound on F^alpha}
        F^\alpha(x)\leq (\alpha-1)(1-x) \qquad \text{for} \quad x\in [0,1].
    \end{equation}
    Since $F^{\alpha \prime}(1)=-(\alpha-1)$, it suffices to show that $\frac{\mr{d}^2}{\mr{d}x^2}F^{\alpha}(x)<0$, or 
    \[
    \sin(\pi x)> -\pi(1-x)\cos(\pi x) \quad \text{for} \: x\in (0,1).
    \]
    This is readily seen from the Taylor expansions of both sides.

    As usual, let $\psi(t)=\sum_{n\leq t}\Lambda(n)$ denote the Chebyshev function. Then, using \eqref{bound on F^alpha} we can bound
    \begin{align}\label{prime sum in terms of psi}
        \sum_{n=1}^\infty F^\alpha_T(\log n)\frac{\Lambda(n)}{\sqrt{n}} \leq & (\alpha-1)\int_{1}^{e^T}\frac{\md \psi(t)}{\sqrt{t}}-\frac{\alpha -1}{T}\int_1^{e^T} \frac{\log t}{\sqrt{t}}\md \psi(t)\notag \\
        =& (\alpha-1)\left(\frac{\psi(e^T)}{e^{T/2}}+\frac{1}{2}\int_1^{e^T}\frac{\psi(t)}{t^{3/2}}\md t\right)\notag\\
        &\hspace{1cm}-\frac{\alpha -1}{T}\left(\frac{T}{e^{T/2}}\psi(e^T)-\int_1^{e^T}\psi(t)\left(\frac{1}{t^{3/2}}-\frac{\log t}{2t^{3/2}}\right)\md t\right)\notag\\
        =& \frac{\alpha-1}{T}\int_1^{e^T}\frac{\psi(t)}{t^{3/2}}\md t+\frac{\alpha-1}{2}\int_1^{e^T}\frac{\psi(t)}{t^{3/2}}\left(1-\frac{\log t}{T}\right)\md t\\
        =& (4+O(T^{-1}))(\alpha-1)\frac{e^{T/2}}{T}\notag,
    \end{align}
    where we invoked the prime number theorem (in a weak form) $\psi(t)-t\ll t/\log t$.
\end{proof}

\begin{remark}
     When $\alpha>3$ and $T$ is sufficiently large, we can remove the term $O(T^{-1})$ on the right-hand side of \eqref{equation sum over p} by using a more precise inequality than \eqref{bound on F^alpha} at $x=1$ (available from the Taylor series expansion of $F^\alpha$) and a sharper error term in the prime number theorem. We do not pursue this here since the present estimate suffices for our purposes.
\end{remark}

\section{Estimates for individual Dirichlet \texorpdfstring{$L$}{}-functions: Proofs of Theorems~\ref{theorem gamma_chi}--\ref{theorem tilde gamma_chi}} \label{section: estimates for an individual L(s,chi)}

\subsection{Bounding \texorpdfstring{$|\gamma_\chi|$}{}: Proof of Theorem~\ref{theorem gamma_chi}}
We first show \eqref{inequality gamma_chi}. Assume that $L(\frac{1}{2},\chi)\neq 0$, i.e., $|\gamma_\chi|\neq 0$, since the claim holds trivially otherwise. Consider the function $F^\alpha_T(x)=F^\alpha(x/T)$ as defined in \eqref{def F^alpha}. Here $\alpha$ is a large parameter and $T=\sqrt{\frac{\alpha+1}{\alpha-1}}\pi/|\gamma_\chi|\gg \log\log q$, as can be seen from \eqref{Omar gamma_chi}. Note that $\sum_\rho \Phi(F^\alpha_T)(\rho)=\sum_\gamma T\wh{F^\alpha}(\frac{T\gamma}{2\pi})\leq 0$ because $\wh{F^\alpha}(t)\leq 0$ for all $|t|\geq \sqrt{\frac{\alpha+1}{\alpha-1}}/2$. Thus, in view of \eqref{positivity technique F} and Lemmas~\ref{Lemma I_delta} and ~\ref{Lemma: sum over p}, we have
\begin{align*}
    (4+O(T^{-1}))\frac{\alpha e^{T/2}}{T/2}\geq J_\chi(F^\alpha_T)\geq \log \frac{q}{\pi}-I_\chi(F^\alpha_T)=\log q+O\left(\frac{\alpha}{T}+1\right).
\end{align*}
Set $\alpha=\log\log q$ so that 
\[
\frac{e^{T/2}}{T/2}\geq \Delta:= \frac{\log q+O(\frac{\alpha}{T}+1)}{(4+O(T^{-1}))\alpha} = \frac{\log q+O(1)}{4\log\log q+O(1)}.
\]
Then
\begin{align*}
    \frac{T}{2}\geq &\log \Delta+\log\log \Delta+\log \left(1+\frac{\log\log \Delta}{\log \Delta}\right)\\
    =& \log \Delta+\log\log \Delta+\frac{\log\log \Delta}{\log \Delta}+O\left(\left(\frac{\log\log \Delta}{\log \Delta}\right)^2\right)\\
    = & \log\log q-\log 4+O\left(\frac{1}{\log\log q}\right),
\end{align*}
since
\begin{align*}
    \log\Delta=&\log\log q-\log\log\log q-\log 4+O\left(\frac{1}{\log\log q}\right),\\
\log\log \Delta=&\log\log\log q-\frac{\log\log\log q}{\log\log q}+O\left(\frac{1}{\log\log q}\right).
\end{align*}
After expressing $T$ in terms of $|\gamma_\chi|$ and $\alpha$ and appealing to the fact that $\sqrt{1-x}= 1-x/2+O(x^2)$ as $x\to 0$,
we arrive at
\begin{align*}
    \frac{\pi}{2|\gamma_\chi|} \geq& \left(1-\frac{1}{\log\log q}+O\left(\frac{1}{(\log\log q)^2}\right)\right)\left(\log\log q-\log 4+O\left(\frac{1}{\log\log q}\right)\right)\\
    =& \log\log q-\log 4-1+O\left(\frac{1}{\log\log q}\right).
\end{align*}
This finishes the proof of \eqref{inequality gamma_chi}.

Next we prove the second assertion regarding the number of zeros below $t_1:=\frac{\pi}{2\log\log q}+\frac{\pi C}{2(\log\log q)^2}$ with $C>\log 4+1$. Let $n$ be the positive integer such that $|\gamma_{\chi,n}|< t_1\leq |\gamma_{\chi, n+1}|$ (here $\gamma_{\chi,k}$ denotes the $k$th zero ordinate ordered by height), and our goal is to show that
\begin{equation}\label{nth zero}
    n\geq \left(\frac{\pi^2}{8}(C-\log 4-1)+o(1)\right)\frac{\log q}{(\log\log q)^2}.
\end{equation}
If we take $\alpha=a\log\log q$ for some constant $a>0$ and $T:=\sqrt{\frac{\alpha+1}{\alpha-1}}\pi/ t_1=2(\log\log q-C+1/a+o(1))$, then we have $\Phi(F^\alpha_T)(\rho_k)\leq 0$ for all $k>n$, and another application of \eqref{explicit formula} gives
\begin{align*}
    \log q+O\left(\frac{\alpha}{T}+1\right)-(4+o(1))(\alpha-1)\frac{e^{T/2}}{T/2}\leq \sum_{1\leq k\leq n}\Phi(F^\alpha_T)(\rho_k)\leq nT\wh{F^\alpha}(0)=nT\frac{4(\alpha+1)}{\pi^2}.
\end{align*}
After rearranging the inequality we obtain
\[
n\geq \left(\frac{\pi^2}{4}\frac{1-4a e^{-C+1/a}}{2a}+o(1)\right)\frac{\log q}{(\log\log q)^2}.
\]
Finally, it is a calculus exercise to determine that the optimal choice of $a$ is $(C-\log 4)^{-1}$, which yields \eqref{nth zero}.

\begin{remark}\label{remark optimality gamma_chi}
    By examining the first part of the preceding proof and the proof of Lemma~\ref{Lemma: sum over p}, we see that if the chosen test function $F$ is supported in $[-\tau,\tau]$ and $\wh{F}(t)\leq 0$ for $|t|\geq t_0$, then the coefficient of the leading term $1/\log\log q$ in the bound on $|\gamma_\chi|$ would be $\pi \tau t_0$, which can be improved only if we manage to detect substantial cancellations in the sum $J_\chi(F)$. For our choice $F^\alpha$, $\tau=1$ and $t_0\to 1/2^+$ as $\alpha\to \infty$, leading to the coefficient $\pi/2$. The constant $\pi/2$ is best possible for this method as $\tau t_0>1/2$ for any admissible function $F$. This is a consequence of the work of Carneiro, Ismoilov and Ramos \cite{CIR} as a special case of a more general sign uncertainty problem for bandlimited functions (see Remark (iii) under \cite[Theorem 2]{CIR}). There they showed that $\tau t_0\geq 1/2$ for any relevant nonzero $F\in L^1(\mathbb{R})$ with $F(0)\geq 0$, but the unique extremal function (up to multiplication by a constant) attaining the equality has $F(0)=0$, while we require $F(0)=1$.
\end{remark}

\subsection{Bounding \texorpdfstring{$n_\chi$}{}: Proof of Theorem~\ref{theorem n_chi}}
As in \cite[Lemma 9]{Oma1}, we work with the Fourier pair
\begin{equation}\label{def H}
    H(x)=
    \begin{cases}
        1-|x| & \text{if $|x|<1$},\\
        0 & \text{otherwise}
    \end{cases}
    \qquad \text{and} \qquad \wh{H}(t)=\left(\frac{\sin \pi t}{\pi t}\right)^2.
\end{equation}
As can be seen from the proofs of Lemmas~\ref{Lemma I_delta} and ~\ref{Lemma: sum over p}, we have $I_\chi(H_T)=O(1)$ and 
\[
J_\chi(H_T)\leq 2\sum_{n=1}^\infty H_T(\log n)\frac{\Lambda(n)}{\sqrt{n}}\leq (4+O(T^{-1}))\frac{e^{T/2}}{T/2}.
\]
By \eqref{positivity technique H} and the non-negativity of $\wh{H}(t)$,
\begin{align*}
    n_\chi \wh{H}(0)T\leq \log q+(4+O(T^{-1}))\frac{e^{T/2}}{T/2}+O(1).
\end{align*}
Putting $T=2\log\log q-\Delta$ for some constant $\Delta$, we see that
\[
n_\chi \leq \frac{1}{2}\frac{\log q}{\log\log q}+\left(\frac{\Delta}{4}+\frac{2}{e^{\Delta/2}}\right)\frac{\log q}{(\log\log q)^2}+O\left(\frac{\log q}{(\log\log q)^3}\right),
\]
and the theorem follows on choosing $\Delta=2\log 4$.

\begin{remark}\label{remark optimality n_chi}
    The choice $H(x)$ is optimal. Indeed, suppose that $F$ is admissible with $\supp(F)\subset [-1,1]$ and $\wh{F}$ non-negative, then by the Poisson summation formula
    \[
    \wh{F}(0)\leq \sum_{n\in \mathbb{Z}}\wh{F}(n)= \sum_{n\in \mathbb{Z}}F(n)=F(0)=1.
    \]
\end{remark}

\subsection{Bounding \texorpdfstring{$|\wt{\gamma_\chi}|$}{}: Proof of Theorem~\ref{theorem tilde gamma_chi}}

We work with the test function
\begin{equation}\label{def G}
   G^\alpha(x):=\frac{\pi^2}{2(\alpha+2)}(F^\alpha*H)(x) 
\end{equation}
where $\alpha$ is a large parameter and $(f*g)(x)=\int_{-\infty}^\infty f(y)g(x-y)\md y$ stands for the convolution of $f$ and $g$. Note that $G^\alpha$ is supported in $[-2,2]$ with $G^\alpha(0)=1$. Moreover, $\wh{G^\alpha}(t)=\frac{\pi^2}{2(\alpha+2)} \wh{F^\alpha}(t)\wh{H}(t)\leq 0$ for $|t|\geq \sqrt{\frac{\alpha+1}{\alpha-1}}/2$.

If $|\wt{\gamma_\chi}|\neq 0$, consider $G^\alpha_T(x)$ with $T= \sqrt{\frac{\alpha+1}{\alpha-1}}\pi/|\wt{\gamma_\chi}|$, so that $\Phi(G^\alpha_T)(\rho)=T\wh{G^\alpha}(\frac{T\gamma}{2\pi})\leq 0$ except when $\gamma=0$. Since $F^\alpha(x)\ll \alpha(1-|x|)$ and $H(x)=1-|x|$ for $x\in [-1,1]$, we have $G^\alpha(x)\ll (1-|x|/2)^3$, which gives
\[
J_\chi(G^\alpha_T) \ll \frac{e^{T}}{T^3}
\]
by a similar argument as in the proof of Lemma~\ref{Lemma: sum over p}. Also $I_\chi(G^\alpha_T)=O(1)$ since $G^\alpha_T(x)=O(1)$, and hence
\begin{equation}\label{n_chi T inequality}
    n_\chi\wh{G^\alpha}(0)T\geq \log q+O\left(\frac{e^{T}}{T^3}\right)
\end{equation}
where $\wh{G^\alpha}(0)=\frac{2(\alpha+1)}{\alpha+2}$. A quick proof by contradiction shows that if $aT+b \frac{e^T}{T^3}\geq c$ for $T\geq 3$ and positive numbers $a$, $b$ and $c$, then
\[
T\geq \min\left\{\frac{c}{(1+\Delta) a},\log \frac{\Delta c}{(1+\Delta)b}+3\log\log\frac{\Delta c}{(1+\Delta)b}\right\}
\]
for any $\Delta>0$. In view of \eqref{n_chi T inequality} and Theorem~\ref{theorem n_chi},
\begin{align*}
    T\geq &\min\Bigg\{\frac{(\alpha+2)\log q}{(\alpha+1)(1+\Delta)}\left(\frac{\log q}{\log\log q}+\frac{(\log 4+1)\log q}{(\log\log q)^2}+O\left(\frac{\log q}{(\log\log q)^3}\right)\right)^{-1},\\
    &\hspace{3cm}\log \frac{\Delta \log q}{(1+\Delta)b}+3\log\log\frac{\Delta \log q}{(1+\Delta)b}\Bigg\}
\end{align*}
where $b>0$ is an absolute constant. Choosing $\alpha=\log\log q$ and $\Delta=(\log\log q)^{-2}$, we find that
\begin{align*}
    T\geq &\min\Bigg\{\left(1+\frac{1}{\log\log q}+O\left(\frac{1}{(\log\log q)^2}\right)\right)\left(\log\log q-\log 4-1+O\left(\frac{1}{\log\log q}\right)\right), \\& \hspace{2cm} \log\log q+\log\log\log q+O(1)\Bigg\}\\
    =&\log\log q-\log 4+O\left(\frac{1}{\log\log q}\right),
\end{align*}
which yields the stated estimate for $|\wt{\gamma_\chi}|$ in view of the definition of $T$.

\section{Estimates for the family of Dirichlet \texorpdfstring{$L$}{}-functions modulo \texorpdfstring{$q$}{}: Proofs of Theorems~\ref{theorem average n_chi}--\ref{theorem: proportion min gamma_q}} \label{section: family of L(s,chi)}

\subsection{Lemmas}
We start with an estimate on the average size of conductors of characters modulo $q$.
\begin{lemma}\label{Lemma: aver conductor}
    \[
    \log q-\log\log q+O(1)\leq \frac{1}{\phi(q)-1}\sum_{\substack{\chi\bmod q\\ \chi\neq \chi_0}}\log (\con(\chi))\leq \log q
    \]
    where $\con(\chi)$ stands for the conductor of $\chi$.
\end{lemma}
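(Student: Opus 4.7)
The plan is to parametrize the sum over characters mod $q$ by their conductors, reducing everything to a divisor sum involving the count $\phi^*(d)$ of primitive characters mod $d$. First I would dispose of the upper bound, which is immediate: since $\con(\chi)\mid q$ for every $\chi$, one has $\log\con(\chi)\le\log q$, and so the average over $\chi\neq\chi_0$ is automatically at most $\log q$.

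For the lower bound, the target is the clean identity
\[
\sum_{d\mid q}\phi^*(d)\log(q/d)=\phi(q)\sum_{p\mid q}\frac{\log p}{p-1}.
\]
To prove it, I would M\"obius-invert $\phi^*(d)=\sum_{e\mid d}\mu(d/e)\phi(e)$, swap the order of summation, and invoke the standard identity $\sum_{f\mid n}\mu(f)\log(n/f)=\Lambda(n)$ to rewrite the left-hand side as $\sum_{e\mid q}\phi(e)\Lambda(q/e)$. The latter is supported only on $e$ with $q/e$ a prime power, so grouping by primes $p\mid q$ and exponents $1\le k\le v_p(q)$ leaves $\sum_{p\mid q}\log p\sum_{k=1}^{v_p(q)}\phi(q/p^k)$; writing out $\phi(q/p^k)$ via multiplicativity and summing the resulting near-geometric series telescopes cleanly to $\phi(q)/(p-1)$, which finishes the identity.

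With the identity in hand, combining $\log d=\log q-\log(q/d)$ with the bijection between characters mod $q$ and pairs $(d,\psi)$ where $d\mid q$ and $\psi$ is primitive mod $d$ (and noting that $\con(\chi_0)=1$ contributes zero) yields
\[
\sum_{\chi\neq\chi_0}\log\con(\chi)=\phi(q)\log q-\phi(q)\sum_{p\mid q}\frac{\log p}{p-1}.
\]
It then remains to show $\sum_{p\mid q}\frac{\log p}{p-1}\le\log\log q+O(1)$. Here I would use the fact that if $q$ has $k$ distinct prime divisors, then the product of any $k$ distinct primes dividing $q$ is at most $q$; since $x\mapsto\log x/(x-1)$ is decreasing for $x\ge 2$, the sum is maximized when the prime divisors are the first $k$ primes, whose primorial being $\le q$ forces them to lie below some $y\ll\log q$ by the prime number theorem, and Mertens' theorem $\sum_{p\le y}\frac{\log p}{p-1}=\log y+O(1)$ then closes the bound. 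Dividing by $\phi(q)-1$ and absorbing the slack $\log q/\phi(q)=o(1)$ (which follows from the trivial $\phi(q)\gg q/\log\log q$) produces the claimed lower bound $\log q-\log\log q+O(1)$.

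The only step requiring real care is the telescoping identity for $\Sigma$; once established, everything else is routine bookkeeping plus Mertens, so I do not anticipate a serious obstacle.
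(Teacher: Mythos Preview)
Your proposal is correct and follows essentially the same route as the paper: the upper bound is trivial, the lower bound reduces to the identity $\frac{1}{\phi(q)}\sum_{\chi}\log\con(\chi)=\log q-\sum_{p\mid q}\frac{\log p}{p-1}$, and the prime sum is then bounded by the monotonicity-plus-Mertens argument you describe. The only difference is that the paper cites this identity from \cite{DG} rather than deriving it, whereas you supply a self-contained proof via M\"obius inversion and the von~Mangoldt identity; your telescoping computation of $\sum_{k=1}^{v_p(q)}\phi(q/p^k)=\phi(q)/(p-1)$ is correct.
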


\begin{proof}
    The upper bound is clear with equality attained whenever $q>2$ is a prime. On the other hand, one can show that (see \cite[Proposition 3.3]{FM})
    \[
    \frac{1}{\phi(q)}\sum_{\chi\bmod q}\log (\con(\chi)) =\log q - \sum_{p|q}\frac{\log p}{p-1}.
    \]
    Let $\omega(q)$ denote the number of distinct prime divisors of $q$, and $p_k$ the $k$th prime. Since $\frac{\log x}{x-1}$ is decreasing, the last sum over $p|q$ is 
    \[
    \leq \sum_{p\leq p_{\omega(q)}}\frac{\log p}{p-1}=\log p_{\omega(q)}+O(1)\leq \log\omega(q)+\log\log(\omega(q)+1)+O(1)\leq \log\log q+O(1)
    \]
    where we used the standard fact that $\omega(q)\ll \frac{\log q}{\log\log q}$.
\end{proof}

Next we introduce an averaged version of Lemma~\ref{Lemma: sum over p}.
\begin{lemma}\label{lemma: sum over p=1 mod q}
    For any fixed $\epsilon>0$,
    \[
    \bigg|\sum_{\substack{\chi\bmod q\\ \chi\neq \chi_0}}J_\chi(F^\alpha_T)\bigg|\ll 
    \begin{cases}
        \alpha e^{T/2}+\alpha \sqrt{q}\log q, & T\leq (1+\epsilon)\log q,\\
        \frac{\alpha  e^{T/2}}{T}, & T>(1+\epsilon)\log q,
    \end{cases}
    \]
where the implied constant depends only on $\epsilon$.
\end{lemma}

\begin{proof}
    By \eqref{bound on F^alpha} and the orthogonality relation of characters
    \begin{equation}\label{ortho}
        \sum_{\substack{\chi\bmod q\\\chi\neq \chi_0}}\chi(n)=
        \begin{cases}
            \phi(q)-1 & \text{if $n\equiv 1 \bmod q$},\\
            0 & \text{if $n\equiv 0\bmod q$},\\
            -1 & \text{otherwise},
        \end{cases}
    \end{equation} 
    we obtain
    \[
    \frac{1}{\phi(q)-1}\sum_{\substack{\chi\bmod q\\ \chi\neq \chi_0}}\sum_{p}\Re(\chi(p))F^\alpha_T(\log p)\frac{\log p}{\sqrt{p}} \leq \alpha\sum_{\substack{p\equiv 1\bmod q \\ p\leq e^T}}\frac{\log p}{\sqrt{p}}\left(1-\frac{\log p}{T}\right).
    \]
    Assume $e^T\geq 2q$, otherwise a trivial estimation suffices. According to the Brun\textendash Titchmarsh theorem \cite[Theorem 2]{MV2}, $\pi(t;q,1)\leq \frac{2t}{\phi(q)\log(t/q)}$ for $t>2q$ where $\pi(t;q,1):=\#\{p: p\equiv 1\bmod q, \:p\leq t\}$, and so the above is
    \begin{align*}
        \leq &\alpha\sum_{\substack{p\equiv 1\bmod q \\ p\leq 2q}}\frac{\log p}{\sqrt{p}}\left(1-\frac{\log p}{T}\right)+\frac{\alpha}{2}\int_{2q}^{e^T}\frac{\pi(t;q,1)(\log t-2)}{t^{3/2}}\left(1-\frac{\log t}{T}\right)\md t\\
        \leq &\frac{\alpha \log (q+1)}{\sqrt{q+1}}+\frac{\alpha}{\phi(q)} \int_{2q}^{e^T}\frac{\log t}{\sqrt{t}\log (t/q)}\left(1-\frac{\log t}{T}\right)\md t.
    \end{align*}
    Estimating the integral and multiplying by $\phi(q)-1$ yields the desired upper bound. For the lower bound, we have
    \begin{align*}
        -\sum_{\substack{\chi\bmod q\\ \chi\neq \chi_0}}\sum_{p}\Re(\chi(p))F^\alpha_T(\log p)\frac{\log p}{\sqrt{p}}
        \leq &\alpha\sum_{\substack{p\not \equiv 0,1\bmod q \\ p\leq e^T}}\frac{\log p}{\sqrt{p}}\left(1-\frac{\log p}{T}\right)\\
        \ll & \alpha \int_2^{e^T} \frac{1}{\sqrt{t}}\left(1-\frac{\log t}{T}\right)\md t\\
        \ll & \frac{\alpha e^{T/2}}{T}.
    \end{align*}
    Finally, the contribution from higher powers of primes can be absorbed into the stated estimate.
\end{proof}

\subsection{Average of \texorpdfstring{$n_\chi$}{}: Proof of Theorem~\ref{theorem average n_chi}}
As in the proof of Theorem~\ref{theorem n_chi}, we apply the explicit formula \eqref{explicit formula} to $H_T$ and now average over all non-principal $\chi\bmod q$. By exploiting the orthogonality relation as in the proof of Lemma~\ref{lemma: sum over p=1 mod q}, we obtain
\begin{align*}
    -\sum_{\substack{\chi\bmod q\\ \chi\neq \chi_0}}J_\chi(H_T)\ll \sum_{p\not\equiv  0, 1\bmod q}H_T(\log p)\frac{\log p}{\sqrt{p}} \ll \frac{e^{T/2}}{T}.
\end{align*}
Since $\con(\chi)\leq q$ for all $\chi$ mod $q$ and $\phi(q)\gg \frac{q}{\log\log q}$, we exploit \eqref{positivity technique H} to write
\begin{align*}
    \frac{1}{\phi(q)-1}\sum_{\substack{\chi\bmod q\\\chi\neq \chi_0}} n_\chi T\leq &\log \frac{q}{\pi}-\frac{1}{\phi(q)-1}\sum_{\substack{\chi\bmod q\\\chi\neq \chi_0}} I_\chi(H_T)+O\left(\frac{e^{T/2}}{\phi(q)T}\right)\\
    =&\log q+O\left(\frac{(\log\log q)e^{T/2}}{qT}+1\right).
\end{align*}
Taking $T=2(\log q+\log\log q-\log\log\log q)$ in the above inequality, we deduce that
\begin{align*}              
    \frac{1}{\phi(q)-1}\sum_{\substack{\chi\bmod q\\\chi\neq \chi_0}} n_\chi 
    \leq& \frac{\log q+O(1)}{2(\log q+\log\log q-\log\log\log q)}\\
    =&\frac{1}{2}-\frac{\log\log q}{2\log q}+O\left(\frac{\log\log\log q}{\log q}\right),
\end{align*}
as desired.

\subsection{Small \texorpdfstring{$|\gamma_\chi|$}{}: Proof of Theorem~\ref{theorem min gamma_q}}

Let $T=\sqrt{\frac{\alpha+1}{\alpha-1}}\pi/|\gamma_q|$ where $|\gamma_q|:=\min_{\chi\neq \chi_0}|\gamma_\chi|$. Assume that $T\geq (1+\epsilon)\log q$. Applying \eqref{positivity technique F} and averaging, we find by Lemmas~\ref{Lemma: aver conductor} and ~\ref{lemma: sum over p=1 mod q} that 
\[
\log q -\log\log q \ll \frac{\alpha e^{T/2}}{\phi(q)T}\ll \frac{\alpha e^{T/2}\log\log q}{qT}.
\]
Now put $\alpha=a\log q$ for some appropriate constant $a>0$ so that $\frac{e^{T/2}}{T/2}\geq \frac{q}{\log\log q}$, which implies that $T/2\geq \log q+\log\log q-\log\log\log q$ for large $q$. Observe that our starting assumption on $T$ for the application of Lemma~\ref{lemma: sum over p=1 mod q} in fact unnecessary, since repeating the above argument under the assumption $T<(1+\epsilon)\log q$ would lead to a contradiction. The lower bound on $T$ in turn gives
\begin{align*}
    \frac{|\gamma_q|\log q}{2\pi}\leq & \frac{\log q}{4(\log q+\log\log q-\log\log\log q)}\sqrt{\frac{a \log q+1}{a\log q-1}}\\
    =&\frac{1}{4}-\frac{\log\log q}{4\log q}+O\left(\frac{\log\log\log q}{\log q}\right).
\end{align*}

\subsection{Large \texorpdfstring{$|\gamma_\chi|$}{}: Proof of Theorem~\ref{theorem: max gamma_chi}}\label{subsection: max gamma_chi}

It will soon be seen that we need an admissible function $K(x)$ supported on $[-1,1]$ with non-negative $\wh{K}$ that makes the quantity
\[
\max \left\{\beta>0: \min_{|t|\leq 2\beta}\wh{K}(t)\geq \frac{1}{2}\right\}
\] 
as large as possible. The following Fourier pair shows that the maximum is at least $1/4$:
\begin{equation}\label{def K}
    \begin{split}
        K(x):=F^1(x)=
        \begin{cases}
            (1-|x|)\cos(\pi x)+\dfrac{1}{\pi}\sin(\pi |x|) & \text{if\:\:$x\in [-1,1]$},\\
            0 & \text{otherwise},
        \end{cases}
    \end{split}
\end{equation}
\[
\wh{K}(t)=\frac{8}{\pi^2}\left(\frac{\cos(\pi t)}{1-4t^2}\right)^2.
\]
We suspect that this test function is optimal. A short calculation reveals that $K(x)$ vanishes up to the second derivative as $x\to 1^-$, and more precisely one has $K(x)\leq 4(1-|x|)^3$. Assuming $T\geq (1+\epsilon)\log q$, it follows as in the proof of Lemma~\ref{lemma: sum over p=1 mod q} that 
\[
-\sum_{\substack{\chi\bmod q\\\chi\neq \chi_0}}J_\chi(K_T) \ll \frac{e^{T/2}}{T^3}.
\]
Moreover, we see from the proof of Lemma~\ref{Lemma I_delta} that 
\[
I_\chi(K_T)=
\begin{cases}
    \gamma_E+3\log 2+\pi/2+O(T^{-1}) & \text{if $\chi(-1)=1$},\\
    \gamma_E+3\log 2-\pi/2+O(T^{-1}) & \text{if $\chi(-1)=-1$},
\end{cases}
\]
and thus
\[
-\frac{1}{\phi(q)-1}\sum_{\substack{\chi\bmod q\\\chi\neq \chi_0}} I_\chi(K_T)=-\gamma_E-3\log 2+O(T^{-1})+O(\phi(q)^{-1})
\]
by the orthogonality relation \eqref{ortho}.

For $\beta>0$, denote by $N(\frac{2\pi \beta}{\log q},\chi)$ the number of zeros of $L(s,\chi)$ with $|\gamma|\leq \frac{2\pi \beta}{\log q}$. The non-negativity of $\wh{K}$ implies that
\begin{align*}
    \frac{1}{\phi(q)-1}\sum_{\substack{\chi\bmod q\\\chi\neq \chi_0}}  N\left(\frac{2\pi \beta}{\log q},\chi\right) &T \min_{|t|
    \leq \frac{\beta T}{\log q}} \wh{K}(t)\\
    \leq & \frac{1}{\phi(q)-1}\sum_{\substack{\chi\bmod q\\\chi\neq \chi_0}} \sum_\gamma T\wh{K}\left(\frac{T\gamma}{2\pi}\right)\\
    \leq & \log \frac{q}{\pi}-\gamma_E-3\log 2+O\left(\frac{(\log\log q)e^{T/2}}{qT^3}+\frac{1}{T}\right).
\end{align*}
Choose $T=2\log q$, so the big-$O$ term in the previous line is $O((\log q)^{-1})$. We aim to maximize $\beta$ subject to the condition that
\begin{equation}\label{aver. no. of zeros < 1}
    \log \frac{q}{\pi}-\gamma_E-3\log 2+O\left(\frac{1}{\log q}\right)<2\log q \cdot \min_{|t|
    \leq 2\beta} \wh{K}(t)
\end{equation}
for all large $q$, since then the average number of zeros below $\frac{2\pi \beta}{\log q}$ will be strictly less than 1. As $\wh{K}(t)$ is monotonically decreasing on $|t|\leq 2\beta$ (for $\beta<1/2$, say), the condition \eqref{aver. no. of zeros < 1} can be rewritten as
\[
\frac{8}{\pi^2}\left(\frac{\cos(2\pi \beta)}{1-16\beta^2}\right)^2 > \frac{1}{2}\left(1-\frac{\log \pi+\gamma_E+3\log 2}{\log q}+O\left(\frac{1}{(\log q)^2}\right)\right).
\]
If we take $\beta=(1+\tilde{\beta})/4$ where $\tilde{\beta}=o(1)$, then the left-hand side becomes
\[
\frac{8}{\pi^2}\left(\frac{\sin(\pi \tilde{\beta}/2)}{2\tilde{\beta}+ \tilde{\beta}^2}\right)^2=\frac{1}{2}\left(1-\tilde{\beta}+O(\tilde{\beta}^2)\right),
\]
and hence we can choose $\tilde{\beta}$ to be as large as
\[
\tilde{\beta}=\frac{\log \pi+\gamma_E+3\log 2}{\log q}+O\left(\frac{1}{(\log q)^2}\right),
\]
as required.

\begin{remark}
    We briefly discuss a related proportion result. For $\theta\in [0,\pi]$, consider the class of admissible functions
    \begin{equation}\label{def L theta}
        L^\theta(x):=
        \begin{cases}
        (1-|x|)\dfrac{\sinc(\theta(1-|x|))+\cos(\theta x)}{\sinc(\theta) +1} & \text{if \:\: $x\in [-1,1]$},\\
        0 & \text{otherwise}
        \end{cases}
    \end{equation}
    \footnote{Here, as usual, $\sinc(x)$ is defined by $\frac{\sin x}{x}$ if $x\neq 0$ and $1$ otherwise.} with non-negative Fourier transform
    \[
    \wh{L^\theta}(t)=\dfrac{\left(\sinc(\theta/2-\pi t)+\sinc(\theta/2+\pi t)\right)^2}{2\, \sinc(\theta)+2}.
    \]
    Note that $L^0(x)=H(x)$ and $L^\pi(x)=K(x)$ with $H$ as in \eqref{def H} and $K$ as in \eqref{def K}. From the argument in \S\ref{subsection: max gamma_chi} we observe that for $0\leq \beta\leq 1/4$, 
    \begin{align*}
        \liminf_{\substack{q\to \infty}}\frac{1}{\phi(q)-1}\# \left\{\chi\neq \chi_0: \frac{|\gamma_\chi| \log q}{2\pi}>\beta\right\} \geq & 1-\limsup_{q\to \infty}\frac{1}{\phi(q)-1}\sum_{\substack{\chi\bmod q\\\chi\neq \chi_0}}  N\left(\frac{2\pi \beta}{\log q},\chi\right)\\   
        \geq & 1-\frac{1}{2\cdot \max_{\theta\in [0,\pi]}\wh{L^\theta}(2 \beta)} \\
        =& 1-\frac{1}{1+\sinc(4\pi\beta)}.
    \end{align*}
    The cases $\beta=0$ and $1/4$ correspond to Theorems~\ref{theorem average n_chi} and ~\ref{theorem: max gamma_chi}, respectively.
\end{remark}

\subsection{Proportion of \texorpdfstring{$\chi\bmod q$}{} having small zeros: Proof of Theorem~\ref{theorem: proportion min gamma_q}}\label{subsection: proportion}

Hughes and Rudnick's proof of \eqref{Hughes Rudnick proportion} relies on the following result:
\begin{lemma}\label{HuRu proportion lemma}
    Let $f(x)$ be a real, even function with $f(x)\ll (1+|x|)^{-1-\epsilon}$ such that $\wh{f}$ is supported in $[-1,1]$ and that $\wh{f}(0)=1$. If $f(x)\leq 0$ for $|x|\geq \beta$ with $\beta>1/2$, then 
    \begin{equation}\label{HuRu proportion inequality}
        \liminf_{\substack{q\to \infty\\ q\:\text{prime}}}\frac{1}{q-2}\# \left\{\chi\neq \chi_0: \frac{|\gamma_\chi| \log q}{2\pi}<\beta\right\}\geq 1-\sigma(f)^2
    \end{equation}
    where $\sigma(f)^2:=\int_{-1}^{1}|t| \wh{f}(t)^2\md t$.
\end{lemma}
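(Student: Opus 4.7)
The plan is to combine the mean and variance of the one-level density for the family of non-principal $\chi\bmod q$ with Chebyshev's inequality. Set $L=\log q$ and $S(\chi):=\sum_\gamma f(\gamma L/(2\pi))$, where the sum runs over imaginary parts of non-trivial zeros of $L(s,\chi)$.

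First I would derive the mean of $S(\chi)$. Applying Weil's explicit formula (Theorem~\ref{Weil}, after rescaling to absorb the admissibility normalization $F(0)=1$) to the function $F(x):=L^{-1}\wh f(2\pi x/L)$, whose Fourier transform satisfies $\wh F(\gamma)=f(\gamma L/(2\pi))$, produces
\begin{equation*}
S(\chi)=\wh f(0) + O(L^{-1}) - \frac{2}{L}\sum_n \Re\chi(n)\,\wh f\!\left(\frac{2\pi\log n}{L}\right)\frac{\Lambda(n)}{\sqrt n}.
\end{equation*}
The support hypothesis $\supp(\wh f)\subset [-2\pi,2\pi]$ restricts the sum to $n\leq q$, and since $q$ is prime, the orthogonality $\sum_{\chi\neq \chi_0}\chi(n)=-1$ for $1<n<q$ collapses the averaged prime sum to $O(1)$, giving $\frac{1}{q-2}\sum_{\chi\neq\chi_0}S(\chi)=\wh f(0)+o(1)$.

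Next I would compute the variance. Squaring and averaging $S(\chi)$ through the same explicit formula produces a double sum over prime powers; orthogonality selects the pairs $(m,n)$ with $m\equiv n\pmod q$, and since each factor is $\leq q$ the diagonal $m=n$ dominates. The diagonal contributes $\frac{4}{L^2}\sum_n \wh f(2\pi\log n/L)^2\,\Lambda(n)^2/n$, which via $\sum_{p\leq X}(\log p)/p\sim \log X$ and the change of variable $t=2\pi\log n/L$ converges to $\frac{1}{4\pi^2}\int_{-2\pi}^{2\pi}|t|\wh f(t)^2\,\md t$. The off-diagonal contribution is $o(1)$ by a Brun--Titchmarsh-type estimate (as in Lemma~\ref{lemma: sum over p=1 mod q}), which is exactly where the support hypothesis is needed: it keeps the relevant prime-power products below $q^2$. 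Hence
\begin{equation*}
\mathrm{Var}(S):=\frac{1}{q-2}\sum_{\chi\neq\chi_0}(S(\chi)-\wh f(0))^2
= \frac{1}{4\pi^2}\int_{-2\pi}^{2\pi}|t|\,\wh f(t)^2\,\md t + o(1).
\end{equation*}

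The final step is the key observation that if $|\gamma_\chi|L/(2\pi)\geq\beta$, then every zero $\gamma$ of $L(s,\chi)$ satisfies $|\gamma L/(2\pi)|\geq\beta$, hence $f(\gamma L/(2\pi))\geq 0$ by hypothesis, forcing $S(\chi)\geq 0$; combined with $\wh f(0)<0$ this yields $|S(\chi)-\wh f(0)|\geq|\wh f(0)|$. Chebyshev's inequality then implies
\begin{equation*}
\#\{\chi\neq \chi_0:|\gamma_\chi|L/(2\pi)\geq\beta\}
\leq \frac{\mathrm{Var}(S)}{\wh f(0)^2}(q-2)+o(q),
\end{equation*}
which upon dividing by $q-2$ and passing to the $\liminf$ rearranges into \eqref{HuRu proportion inequality}. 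The main obstacle is the variance calculation, specifically verifying that the off-diagonal prime-power pairs vanish in the averaged limit; this is precisely the step where the support condition on $\wh f$ is indispensable, since enlarging it would allow products of prime powers to exceed $q^2$ and so break the orthogonality argument.
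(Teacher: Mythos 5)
The paper's own proof of this lemma is simply a citation to Hughes--Rudnick, and your proposal reconstructs that cited argument correctly in all its essentials: the rescaled explicit formula gives the averaged mean $\wh f(0)$, orthogonality for prime $q$ reduces the variance to a diagonal computation, and the key observation is exactly that the sign hypotheses on $f$ force $S(\chi)\geq 0$ whenever $|\gamma_\chi|\log q/(2\pi)\geq \beta$, so that Chebyshev applies with gap $|\wh f(0)|$. Two small inaccuracies in the sketch are worth noting. First, in the diagonal you write that it contributes $\frac{4}{L^2}\sum_n \wh f(2\pi\log n/L)^2\Lambda(n)^2/n$, but since $\frac{1}{q-2}\sum_{\chi\neq\chi_0}\Re\chi(n)^2\to \tfrac12$ (not $1$), the correct coefficient is $\frac{2}{L^2}$; your stated limit $\frac{1}{4\pi^2}\int_{-2\pi}^{2\pi}|t|\wh f(t)^2\md t$ is the right one, so the slip is only in the intermediate display. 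Second, the genuinely delicate off-diagonal piece is not $n_1\equiv n_2\pmod q$ with $n_1\neq n_2$ (that is vacuous for $n_1,n_2<q$), nor is it controlled by Brun--Titchmarsh; it is the contribution of pairs with $n_1n_2\equiv 1\pmod q$ and $n_1n_2>1$, which must be bounded via the sparsity of factorizations $kq+1=n_1n_2$ into prime powers, in the spirit of the $I_1$ estimate in the proof of Theorem~\ref{theorem: proportion min gamma_q}. With those points repaired the argument matches the source that the paper cites.
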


\begin{proof}
    See the proof of \cite[Theorem 8.3]{HuRu}. 
\end{proof}

The test function $f$ they chose to work with is $\wh{F^\alpha}$ where $\alpha$ is appropriately determined in terms of $\beta$ so that $\wh{F^\alpha}(x)\leq 0$ for $|x|\geq \beta$. We now mention two ways of improving \eqref{Hughes Rudnick proportion}. First note that in order for this method to produce a non-vacuous result we need the right-hand side of \eqref{HuRu proportion inequality} to be positive, which only occurs when $\beta\geq 0.6332\ldots$ for $f=\wh{F^\alpha}$, and this can be avoided if one uses Cantelli's inequality (also referred to as the one-sided Chebyshev's inequality) in place of Chebyshev's inequality in the proof of \cite[Theorem 8.3]{HuRu}, which always yields a non-trivial proportion for $\beta>1/2$. Second of all, since the support of $\wh{f}$ is only required to be contained in rather than strictly equal to $[-1,1]$, we have the flexibility of stretching a fixed $f$ (or shrinking $\wh{f}$) horizontally as $\beta$ increases, which keeps $\wh{f}(0)$ invariant while decreasing $\sigma(f)^2$. 

We now prove Theorem~\ref{theorem: proportion min gamma_q}. To make our proof self-contained and consistent with the previous presentation in this paper, we shall again start from the explicit formula instead of assuming results from \cite{HuRu}. Let $\mc{Q}_\beta=\{\chi \bmod q: \frac{ |\gamma_\chi|\log q}{2\pi}< \beta\}$ and  
\begin{equation}\label{def T proportion}
    T=\sqrt{\frac{\alpha+1}{\alpha-1}}\frac{\pi}{2\pi \beta/\log q}=\sqrt{\frac{\alpha+1}{\alpha-1}}\frac{\log q}{2\beta}.
\end{equation}
To be able to control the square of the prime sum we impose the additional assumption that $T\leq \log q$, which holds for all large enough $\alpha$ so long as $\beta>1/2$ (our choice of $\alpha$ will be a constant depending only on $\beta$, not on $q$, and the big-$O$ terms in what follows may depend on $\alpha$). By the Cauchy\textendash Schwarz inequality
\begin{equation}\label{Q inequality}
    \# \mc{Q}_\beta\geq \frac{\left(\sum_{\chi\in \mc{Q}_\beta}\sum_{\rho_\chi}\Phi(F^\alpha_T)(\rho_\chi)\right)^2}{\sum_{\chi\in \mc{Q}_\beta}\left(\sum_{\rho_\chi}\Phi(F^\alpha_T)(\rho_\chi)\right)^2}.
\end{equation}
We first bound the numerator from below. The key observation is again that
\[
\sum_{\substack{\chi \not \in \mc{Q}_\beta\\\chi\neq \chi_0}}\sum_{\rho_\chi}\Phi(F^\alpha_T)(\rho_\chi)\leq 0,
\]
which implies that 
\begin{align*}
    \sum_{\chi\in \mc{Q}_\beta}\sum_{\rho_\chi}\Phi(F^\alpha_T)(\rho_\chi)
    \geq & \sum_{\chi\neq \chi_0}\sum_{\rho_\chi}\Phi(F^\alpha_T)(\rho_\chi)\\
    \geq & (\phi(q)-1)(\log q-\log\log q+O(1))-\sum_{\chi\neq \chi_0} I_\chi(F^\alpha_T)-\sum_{\chi\neq \chi_0} J_\chi(F^\alpha_T)\\
    =& \phi(q)(\log q-\log\log q+O(1)),
\end{align*}
where we applied Lemmas~\ref{Lemma: aver conductor} and ~\ref{lemma: sum over p=1 mod q}. On the other hand, the denominator is at most 
\begin{multline}\label{second moment zero sum}
    \sum_{\chi\neq \chi_0}\left(\sum_{\rho_\chi}\Phi(F^\alpha_T)(\rho_\chi)\right)^2\\
    \leq \phi(q)(\log q+O(1))^2-2\sum_{\chi\neq \chi_0}(\log(\con(\chi))+O(1))J_\chi(F^\alpha_T)+4\sum_{\chi\neq \chi_0}J_\chi(F^\alpha_T)^2.
\end{multline}
The second term is, by the same lemmas and the Cauchy-Schwarz inequality,
\begin{align*}
    \ll& \sqrt{q}\log q+\sqrt{\sum_{\chi\neq \chi_0} (\log q-\log(\con(\chi))+O(1))^2\sum_{\chi\neq \chi_0}J_\chi(F^\alpha_T)^2}\\
    \ll& \sqrt{q}\log q+\sqrt{\phi(q)\log q \log\log q}\sqrt{\sum_{\chi\neq \chi_0}J_\chi(F^\alpha_T)^2}.
\end{align*}
According to \eqref{ortho},
\begin{align*}
    \sum_{\chi\neq \chi_0}J_\chi(F^\alpha_T)^2=& 4\sum_{\chi\neq \chi_0}\sum_{n_1,n_2}\Re(\chi(n_1))\Re(\chi(n_2)) F^\alpha_T(\log n_1)F^\alpha_T(\log n_2)\frac{\Lambda(n_1)\Lambda(n_2)}{\sqrt{n_1n_2}} \\
    \leq & \sum_{\chi\neq \chi_0}\sum_{n_1,n_2}\left(\chi(n_1 n_2)+\overline{\chi(n_1 n_2)}+\overline{\chi(n_1)}\chi( n_2)+\chi(n_1)\overline{\chi( n_2)}\right)\\
    &\hspace{2cm} \times F^\alpha_T(\log n_1)F^\alpha_T(\log n_2)\frac{\Lambda(n_1)\Lambda(n_2)}{\sqrt{n_1n_2}}\\
    \leq & 2\phi(q)\sum_{n_1n_2\equiv 1\bmod q}F^\alpha_T(\log n_1)F^\alpha_T(\log n_2)\frac{\Lambda(n_1)\Lambda(n_2)}{\sqrt{n_1n_2}}\\
    &\hspace{2cm}+2\phi(q)\sum_{n_1\equiv n_2\bmod q} F^\alpha_T(\log n_1)F^\alpha_T(\log n_2)\frac{\Lambda(n_1)\Lambda(n_2)}{\sqrt{n_1n_2}}.
\end{align*}
Label the last two terms by $J_1$ and $J_2$. First,
\begin{align*}
     J_1 \ll & 2\phi(q) \sum_{k\leq \frac{e^{2T}-1}{q}}\sum_{\substack{p_1p_2=kq+1\\1\leq p_1, p_2\leq e^T}}F^\alpha_T(\log p_1)F^\alpha_T(\log p_2)\frac{\log p_1 \log p_2}{\sqrt{kq+1}}\\
    \ll& 
    \frac{\phi(q)}{\sqrt{q}} \sum_{k\leq \frac{e^{2T}}{q}}\frac{(\log kq)^2}{\sqrt{k}}\max_{\substack{\log a+\log b=\log (kq+1)\\0\leq \log a, \log b\leq T}}\left(1-\frac{\log a}{T}\right)\left(1-\frac{\log b}{T}\right)\\
    \ll & \frac{\phi(q)}{\sqrt{q}} \sum_{k\leq \frac{e^{2T}}{q}}\frac{(\log kq)^2\left(1-\frac{\log kq}{2T}\right)^2}{\sqrt{k}}
    \ll e^T \frac{\phi(q)}{q}
    \leq  \phi(q),
\end{align*}   
where we used \eqref{bound on F^alpha} in the second step. Next observe that if $n_1\equiv n_2\bmod q$ and $n_1,n_2\leq e^T\leq q$, then we must have $n_1=n_2$. Therefore,
\begin{align*}
    J_2=&2\phi(q)\sum_{n_1\leq e^T}F^\alpha_T(\log n_1)^2\frac{\Lambda(n_1)^2}{n_1}\\       
    =&2\phi(q)\sum_{p\leq e^{T}}F^\alpha_T(\log p)^2\frac{(\log p)^2}{p}+2\phi(q)\sum_{\substack{p^k\leq e^T\\ k\geq 2}}F^\alpha_T(k\log p)^2\frac{(\log p)^2}{p^k}\\
    =&2\phi(q)\int_2^{e^T} F^\alpha_T(\log x)^2\frac{\log x}{x}\md x+O(\phi(q))\\
    =&2\phi(q)T^2\int_{\frac{\log 2}{T}}^1 u F^\alpha(u)^2 \md u +O(\phi(q)).
\end{align*}
Recall that $\sigma(F^\alpha)^2=2\int_0^1 u F^\alpha(u)^2\md u$. Inserting the previous estimates into \eqref{Q inequality} gives
\[
\# \mc{Q}_\beta\geq \dfrac{\phi(q)^2(\log q-\log\log q+O(1))^2}
{\phi(q)(\log q+O(1))^2+\phi(q)T^2\sigma(F^\alpha)^2+O(\phi(q)+\phi(q)\sqrt{\log q\log\log q}T\sigma(F^\alpha))}.
\]
Recalling the definition \eqref{def T proportion} of $T$, we need $\sqrt{\frac{\alpha+1}{\alpha-1}}\leq 2\beta$ for $T\leq \log q$, and hence 
\begin{align*}
    \liminf_{\substack{q\to \infty}}\frac{\# \mc{Q}_\beta}{\phi(q)-1}\geq &\frac{1}{1+\frac{1}{4\beta^2}\min_{\sqrt{\frac{\alpha+1}{\alpha-1}}\leq 2\beta} \frac{\alpha+1}{\alpha-1}\sigma(F^\alpha)^2} \\
    =&\frac{1}{1+\frac{1}{4\beta^2}\min_{\alpha\geq \frac{4\beta^2+1}{4\beta^2-1}}\frac{\alpha+1}{\alpha-1}\frac{6\alpha^2+\pi^2-3}{12\pi^2}}.
\end{align*}
The function $f(\alpha)=\frac{\alpha+1}{\alpha-1}\frac{6\alpha^2+\pi^2-3}{12\pi^2}$ attains its minimum on $(1,\infty)$ at $\alpha_0=1.8652\ldots$ with $f(\alpha_0)=0.7757\ldots$, and is increasing on $[\alpha_0,\infty)$. It thus follows that
\[
\liminf_{\substack{q\to \infty}}\frac{\# \mc{Q}_\beta}{\phi(q)-1} \geq
\begin{cases}
    \dfrac{1}{1+\frac{1}{4\beta^2}f(\frac{4\beta^2+1}{4\beta^2-1})} & \text{if}\:\:1/2<\beta< \sqrt{\frac{\alpha_0+1}{\alpha_0-1}}/2,\\
    \dfrac{1}{1+\frac{f(\alpha_0)}{4\beta^2}}& \text{if}\:\:\beta\geq \sqrt{\frac{\alpha_0+1}{\alpha_0-1}}/2,
\end{cases}
\]
which concludes the proof.

\begin{remark}\label{remark: higher moments}
    Higher moments would not help us in this proof. One might attempt to use H\"{o}lder's inequality in place of \eqref{Q inequality} to write
    \[
    \# \mc{Q}_\beta\geq \frac{\left(\sum_{\chi\neq \chi_0}\sum_{\rho_\chi}\Phi(F^\alpha_T)(\rho_\chi)\right)^\frac{r}{r-1}}{\left(\sum_{\chi\neq \chi_0}\left(\sum_{\rho_\chi}\Phi(F^\alpha_T)(\rho_\chi)\right)^r\right)^{\frac{1}{r-1}}}
    \]
    where $r\in \mathbb{Z}_{\geq 2}$. The $r$-th moment in the denominator can be evaluated as in \cite[Theorem 5.1]{RS}. First note that this would force us to take $T\leq \frac{2\log q}{r}$, or $\sqrt{\frac{\alpha+1}{\alpha-1}}<4\beta/r$, so we need a larger threshold $\beta>r/4$ to obtain a positive proportion if $r>2$. In fact, we claim that the choice $r=2$ is optimal for all $\beta$. To this end, we compute (for simplicity take $q$ to be prime) that for all admissible $\alpha$,
    \begin{align*}
        &\lim_{q\to \infty} \frac{1}{\phi(q)(\log q)^r}\sum_{\chi\neq \chi_0}\left(\sum_{\rho_\chi}\Phi(F^\alpha_T)(\rho_\chi)\right)^r \\
        =& \lim_{q\to \infty} \frac{1}{\phi(q)(\log q)^r}\sum_{\chi\neq \chi_0}\left(\log q+O(1)+J_\chi(F^\alpha_T)\right)^r\\
        =& \lim_{q\to \infty} \frac{1}{\phi(q)(\log q)^r}\sum_{m=0}^r \binom{r}{m} (\log q)^{r-m} \sum_{\chi\neq \chi_0} J_\chi(F^\alpha_T)^{m}\\
        =& \lim_{q\to \infty} \frac{1}{\phi(q)(\log q)^r} \sum_{\substack{m=0\\m\:\text{even}}}^r \binom{r}{m} (\log q)^{r-m} \frac{m!}{2^{m/2}(m/2)!}T^m\sigma(F^\alpha)^m\\
        =&\sum_{\substack{m=0\\m\:\text{even}}}^r  \binom{r}{m} \frac{m!}{2^{m/2}(m/2)!}\frac{f(\alpha)^{m/2}}{(2\beta)^m}.
    \end{align*}
    We thus arrive at
    \begin{align*}
        \liminf_{q\to \infty} \frac{\# \mc{Q}_\beta}{\phi(q)-1}\geq & \dfrac{1}{\inf_{\sqrt{\frac{\alpha+1}{\alpha-1}}<\frac{4\beta}{r}} \left(1+\displaystyle \sum_{k=1}^{\lfloor r/2\rfloor} \binom{r}{2k} \frac{(2k)!}{2^k k!}\left(\frac{f(\alpha)}{4\beta^2}\right)^k\right)^{\frac{1}{r-1}}}
    \end{align*}
    when $\beta>r/4$. To see that this proportion is largest when $r=2$, we may assume that $\beta>3/4$. In this range it is easy to verify that $g(\beta):=\inf_{\sqrt{\frac{\alpha+1}{\alpha-1}}<2\beta} \frac{f(\alpha)}{4\beta^2}<1/2$, and it follows that for all $r>2$ 
    \begin{align*}
        \inf_{\sqrt{\frac{\alpha+1}{\alpha-1}}<\frac{4\beta}{r}} &\left\{1+\displaystyle \sum_{k=1}^{\lfloor r/2\rfloor} \binom{r}{2k} \frac{(2k)!}{2^k k!}\left(\frac{f(\alpha)}{4\beta^2}\right)^k \right\} \\  
        \geq &1+\displaystyle \sum_{k=1}^{\lfloor r/2\rfloor} \binom{r}{2k} \frac{(2k)!}{2^k k!}g(\beta)^k\\
        \geq & 1+\binom{r}{2}g(\beta)+\sum_{k=2}^{\lfloor r/2\rfloor} \binom{r}{2k} \frac{(2k)!}{2^k k!}\left(g(\beta)^{2k-1}+g(\beta)^{2k}\right)\\ 
        > & 1+ \binom{r-1}{1} g(\beta)+\binom{r-1}{2} g(\beta)^2+\sum_{k=2}^{\lfloor r/2\rfloor} \binom{r}{2k} \frac{(2k)!}{2^k k!}\left(g(\beta)^{2k-1}+g(\beta)^{2k}\right)\\
        \geq & 1+\sum_{j=1}^{r-1} \binom{r-1}{j} g(\beta)^j\\
        =& (1+g(\beta))^{r-1}.
    \end{align*}
\end{remark}

\section{An analogue of Theorem~\ref{theorem: proportion min gamma_q} for quadratic Dirichlet \texorpdfstring{$L$}{}-functions}

The low-lying zeros of the family of real Dirichlet $L$-functions have also been extensively studied. Given a fundamental discriminant $d$, write $\chi_d(\cdot):=\legendre{d}{\cdot}$ for the associated Kronecker symbol. By computing the asymptotic formulas for the first and second moments of the central values $L(\frac{1}{2},\chi_d)$, Jutila \cite{JutMeanValue} showed that $L(\frac{1}{2},\chi_d)\neq 0$ for at least $X/\log X$ of the fundamental discriminants $0<d\leq X$. Later Soundararajan \cite{Sound} worked with mollified moments and managed to prove that $L(\frac{1}{2},\chi_d)\neq 0$ for $\frac{7}{8}$ of the $d$'s. At around the same time, \"{O}zl\"{u}k and Snyder \cite{OS99} proved a slightly larger non-vanishing proportion $\frac{15}{16}$ assuming GRH by establishing the one-level density for the symplectic family of quadratic Dirichlet $L$-functions for test functions $f$ with $\supp(\wh{f})\in [-2,2]$, which was independently discovered by Katz and Sarnak \cite{KS}. 

We now modify the argument in \S\ref{subsection: proportion} and apply it to quadratic Dirichlet $L$-functions. We start with a few lemmas. The first is a mean square estimate on real character sums due to Jutila. Throughout let $D(X)=\{d: X\leq |d|\leq 2X\}$ and $X^*=|D(X)|\sim \frac{6}{\pi^2}X$.
\begin{lemma}
    \begin{equation}\label{Jutila}
        \sum_{\substack{n\leq N\\ n\not \neq \square}}\Big|\sum_{d\in D(X)}\chi_d(n)\Big|^2 \ll N X (\log N)^{10},
    \end{equation}
    where the outer sum runs over non-square positive integers $n\leq N$.
\end{lemma}
\begin{proof}
    See \cite[Corollary to Theorem 1]{Jut}.
\end{proof}

From now on the big-$O$ terms may depend on $\alpha$.
\begin{lemma}
    \begin{equation}\label{prime sum 1-level}
        2\sum_{d\in D(X)}\sum_{n=1}^\infty\chi_d(n)F^\alpha_T(\log n)\frac{\Lambda(n)}{\sqrt{n}}=X^*T\wh{F^\alpha}(0)/2+O(\sqrt{X}e^{T/2}T^6+X).
    \end{equation}
\end{lemma}
\begin{proof}
    The sum in question can be split as
    \[
    2\sum_{p\leq e^T} \sum_{d\in D(X)}\chi_d(p) F^\alpha_T(\log p)\frac{\log p}{\sqrt{p}}+2\sum_{p\leq e^{T/2}} \sum_{d\in D(X)} \chi_d(p^2)F^\alpha_T(2\log p)\frac{\log p}{p}+O(X^*).
    \]
    By the Cauchy\textendash Schwarz inequality and \eqref{Jutila}, the sum over primes contributes 
    \[
    \ll \sqrt{\sum_{p\leq e^T}\Big|\sum_{d\in D(X)} \chi_d(p)\Big|^2} \sqrt{\sum_{p\leq e^T}\left(F^\alpha_T(\log p)\frac{\log p}{\sqrt{p}}\right)^2}\ll \sqrt{X}e^{T/2}T^6,
    \]
    while the sum over squares of primes equals 
    \begin{align*}
        2X^*\sum_{p\leq e^{T/2}} F^\alpha_T(2\log p)\frac{\log p}{p}+O\left(\sum_{p\leq e^{T/2}} \frac{X}{p} F^\alpha_T(2\log p)\frac{\log p}{p}\right) = X^*T\int_0^1 F^\alpha(x)\md x+O(X).
    \end{align*}
\end{proof}

\begin{lemma}\label{lemma prime sum 2-level}
    For $T\leq \log X-15\log\log X$,
    \[
    \sum_{d\in D(X)} \prod_{i=1}^2 \sum_{n_i}\chi_d(n_i) F^\alpha_T(\log n_i)\frac{\Lambda(n_i)}{\sqrt{n_i}}=X^* T^2\int_0^1 u F^\alpha(u)^2\md u+O\left(\frac{XT^2}{\log\log X}\right).
    \]
\end{lemma}
\begin{proof}
    This follows from the work of Gao \cite[\S 3.6]{Gao} on the $n$-level density, specialized to the case $n=2$. Roughly speaking, only the contribution from the diagonal terms $p_1=p_2$ matters. 
\end{proof}

Put $\mc{D}_\beta=\{d\in D(X): \frac{|\gamma_{\chi_d}|\log X}{2\pi}<\beta\}$. Further let $T=\sqrt{\frac{\alpha+1}{\alpha-1}}\frac{\log X}{2\beta}$ where $\alpha>1$ is chosen such that $\sqrt{\frac{\alpha+1}{\alpha-1}}<2\beta$. We then proceed with a Cauchy\textendash Schwarz argument as before. On the one hand, by the positivity trick and \eqref{prime sum 1-level} we have
\begin{align*}
    \sum_{d\in \mc{D}_\beta}\sum_{\rho_d} \Phi(F^\alpha_T)(\rho_d)\geq & X^* \log X-2\sum_{d\in D(X)}\sum_{n=1}^\infty\chi_d(n)F^\alpha_T(\log n)\frac{\Lambda(n)}{\sqrt{n}}+O(X)\\
    = & X^*\log X-X^*T\wh{F^\alpha}(0)/2+O(X)\\
    \geq & X^*\log X \left(1-\frac{\sqrt{\frac{\alpha+1}{\alpha-1}}(\alpha+1)}{\pi^2\beta}+o(1)\right).
\end{align*}
Note, however, that the right-hand side needs to be non-negative in order to keep the inequality valid upon squaring both sides. That is, we need $\sqrt{\frac{\alpha+1}{\alpha-1}}(\alpha+1)<\pi^2\beta$. On the other hand, 
\begin{align*}
    \sum_{d\in \mc{D}_\beta}&\left(\sum_{\rho_d}\Phi(F^\alpha_T)(\rho_d)\right)^2\\
    &=X^*(\log X)^2+O(X^* \log X)-(4\log X+O(1))\sum_{d\in D(X)}\sum_{n=1}^\infty \chi_d(n) F^\alpha_T(\log n)\frac{\Lambda(n)}{\sqrt{n}}\\
    &\qquad +4\sum_{d\in D(X)}\sum_{n_1,n_2}\chi_d(n_1n_2) F^\alpha_T(\log n_1)F^\alpha_T(\log n_2)\frac{\Lambda(n_1)\Lambda(n_2)}{\sqrt{n_1n_2}} \\
    &=X^*(\log X)^2\left(1-\frac{T}{\log X}\wh{F^\alpha}(0)+\frac{T^2}{(\log X)^2}2\sigma(F^\alpha)^2+o(1)\right),
\end{align*}
where we applied Lemma~\ref{lemma prime sum 2-level}. The two required conditions $\sqrt{\frac{\alpha+1}{\alpha-1}}< 2\beta$ and $\sqrt{\frac{\alpha+1}{\alpha-1}}(\alpha+1)<\pi^2\beta$ can be simultaneously satisfied if and only if $\beta>\frac{\pi}{2\sqrt{\pi^2-4}}=0.648\ldots$. After collecting the previous estimates we reach the following conclusion:

\begin{theorem}\label{theorem proportion quadratic}
Assume GRH. For $\beta\geq 0.649$,
\begin{equation}\label{equation proportion quadratic}
    \liminf_{X\to \infty}\frac{\#\mc{D}_\beta}{X^*}\geq \sup_{\substack{\sqrt{\frac{\alpha+1}{\alpha-1}}(\alpha+1)< \pi^2\beta \\ \sqrt{\frac{\alpha+1}{\alpha-1}}< 2\beta}}
    \dfrac{\left(1-\dfrac{\sqrt{\frac{\alpha+1}{\alpha-1}}(\alpha+1)}{\pi^2\beta}\right)^2}{1-\dfrac{2\sqrt{\frac{\alpha+1}{\alpha-1}}(\alpha+1)}{\pi^2\beta}+\dfrac{(6\alpha^2+\pi^2-3)(\alpha+1)}{24\pi^2\beta^2(\alpha-1)}}.
\end{equation}
\end{theorem}

Unfortunately we are not able to find a closed form expression for the right-hand side.

\begin{figure}[ht]
    \centering
    \includegraphics[width=7cm]{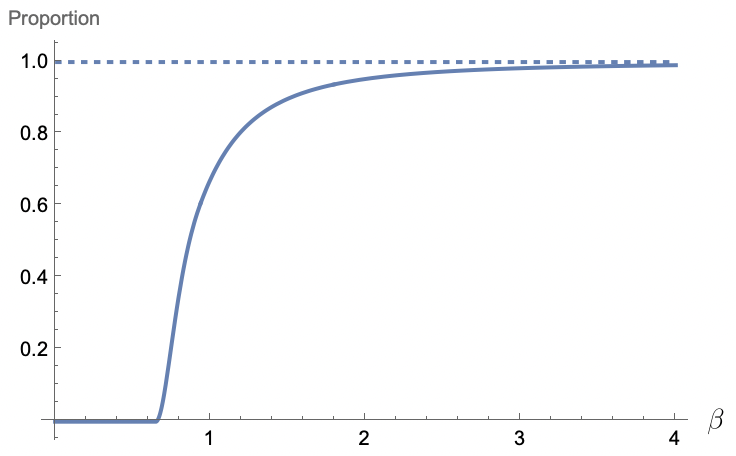}
    \caption{A plot of the right-hand side of \eqref{equation proportion quadratic} for small $\beta$.}
    \label{Figure_proportion (quadratic)}
\end{figure}

\section{Explicit estimates on \texorpdfstring{$|\gamma_\chi|$}{}}\label{section: explicit results}
It might be of interest to establish some explicit results for the low-lying zeros of $L(s,\chi)$. All the numerical computations in this section are carried out on \textit{Mathematica}. One natural question to ask is: for a given real number $t_0>0$, what is the least conductor $q_0$ such that $|\gamma_\chi|\leq t_0$ for all $q\geq q_0$? If $t_0>5/7$, the following estimate on the zero-counting function obtained by Bennett \textit{et al.} provides an unconditional upper bound on $q_0$:

\begin{theorem}[{\cite[Theorem 1.1]{BMOR}}]\label{explicit N(T,chi)}
    Let $\chi$ be a character with conductor $q>1$. If $t\geq 5/7$ and $\ell:=\log \frac{q(t+2)}{2\pi}>1.567$, then
    \[
    \left|N(t,\chi)-\left(\frac{t}{\pi}\log \frac{qt}{2\pi e}-\frac{\chi(-1)}{4}\right)\right|\leq 0.22737\ell+2\log(1+\ell)-0.5
    \]
    where $N(t,\chi)$ counts the number of zeros of $L(s,\chi)$ with $0<\beta<1$ and $|\gamma|\leq t$.
\end{theorem}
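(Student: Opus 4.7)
The plan is to adapt the classical Backlund--Rosser argument-principle strategy, making every step quantitative. I would start from the completed $L$-function $\xi(s,\chi)=(q/\pi)^{s/2}\Gamma((s+\delta_\chi)/2)L(s,\chi)$ and write $2\pi N(t,\chi)$ as the change in $\arg\xi(s,\chi)$ around a rectangle $R$ with vertices $-1\pm it,\:2\pm it$. The functional equation $\xi(s,\chi)=W(\chi)\xi(1-s,\ov{\chi})$ with $|W(\chi)|=1$ halves this to the contour $\mathcal{C}$ running from $1/2$ to $2$ to $2+it$ to $1/2+it$, yielding
\[
N(t,\chi) = \frac{1}{\pi}\Delta_\mathcal{C}\arg\xi(s,\chi) + O(1).
\]
Splitting $\log\xi$ into its archimedean and $L$-parts then produces the main term from the first two factors and an error $S(t,\chi):=\frac{1}{\pi}\arg L(1/2+it,\chi)$ from the last.

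For the main term I would apply an explicit form of Stirling's expansion for $\log\Gamma$ at $z=(1/2+\delta_\chi+it)/2$, with the remainder carrying a concrete numerical constant. Combined with the contribution from $(q/\pi)^{s/2}$, this produces $\frac{t}{\pi}\log(qt/2\pi e)-\chi(-1)/4$ together with a bounded discrepancy that contributes to the $-0.5$ in the final bound. This step is bookkeeping, routine once one fixes a sufficiently sharp Stirling remainder.

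The main obstacle is the explicit estimate on $S(t,\chi)$. I would proceed edge by edge. Along $\Re s = 2$ the Dirichlet series gives $|L(2+it,\chi)-1|\leq \zeta(2)-1<0.645$, so $\arg L$ stays in a fixed half-plane and contributes only an absolute constant. On the horizontal segments $\Im s = \pm t$, the variation of $\arg L(\sigma+it,\chi)$ across $\sigma\in[1/2,2]$ changes by $\pi$ exactly when $\Re L(\sigma+it,\chi)$ vanishes, reducing the problem to counting real zeros of $g(\sigma):=\frac{1}{2}(L(\sigma+it,\chi)+L(\sigma-it,\ov{\chi}))$ on that segment. I would bound this count by applying Jensen's inequality on a disk centered at $2+it$ of radius chosen so that the segment lies inside, which gives a bound of the form $c\cdot \max_{|z-z_0|=r}\log|g(z)|$. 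Convexity together with an explicit estimate on $|L(s,\chi)|$ for $\Re s\geq 2$ then converts this into a term linear in $\ell=\log(q(t+2)/2\pi)$. Optimizing the disk radius, its center, and the auxiliary function used on its boundary is the delicate step that drives the coefficient of $\ell$ down to $0.22737$, with the residual $2\log(1+\ell)$ absorbing the lower-order Cauchy-type corrections. The hypothesis $\ell>1.567$ is precisely what is required for these optimized inequalities, in particular the convexity inputs and Jensen comparisons, to hold uniformly in $q$ and $t$.
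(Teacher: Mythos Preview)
This theorem is not proved in the paper at all: it is quoted verbatim from \cite[Theorem 1.1]{BMOR} and used only as a black box to obtain an unconditional value of $q_0$ (e.g., $q_0\approx 1.3\times 10^{47}$ for $t_0=1$) against which the paper's conditional positivity-based estimates are then compared. So there is no ``paper's own proof'' to match your proposal against.

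That said, your sketch is a faithful outline of the Backlund--Rosser strategy that underlies results of this type, and it is essentially the approach taken in \cite{BMOR}. The honest gap in your write-up is that the specific constants $0.22737$, the $2\log(1+\ell)$ term, and the threshold $\ell>1.567$ are not things one simply ``optimizes'' by choosing a disk radius: in \cite{BMOR} they emerge from a rather involved combination of explicit convexity bounds, a carefully tuned Backlund trick with an optimized auxiliary function, and case analysis in $q$ and $t$. Your proposal correctly identifies the architecture but understates the amount of numerical work needed to land on those exact constants. If you actually want to reproduce the statement with its precise numerics, you would need to consult \cite{BMOR} directly rather than redo the optimization from scratch.
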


Take $t_0=1$ for example. As noted on \cite[pp. 1458]{BMOR}, it follows from Theorem~\ref{explicit N(T,chi)} that $N(1,\chi)\geq 1$, or $|\gamma_\chi|\leq 1$, whenever $q\geq 1.3\times 10^{47}$. Assuming GRH, we can substantially reduce this number by employing the positivity technique. Indeed, we find that for $\alpha=2.6$,
\[
I_\chi(F^\alpha_T)+2\sum_{n\leq e^T}F^\alpha_T(\log n)\frac{\Lambda(n)}{\sqrt{n}}\leq 20.98
\]
for all $T\leq T':=\sqrt{\frac{\alpha+1}{\alpha-1}}\pi$. (The choice of $\alpha$ is nearly optimal.) Thus $\sum_{\rho}\Phi(F^\alpha_T)(\rho)> 0$ for all $T\leq T'$ and $q\geq \pi e^{20.98}\approx 4.1\times 10^{9}$. As explained in \S\ref{section: preliminaries}, if $|\gamma_\chi|>1$, then the sum over zeros must be negative for $T=\sqrt{\frac{\alpha+1}{\alpha-1}}\pi/|\gamma_\chi|<T'$, from which we conclude that $|\gamma_\chi|\leq 1$ when $q\geq 4.1\times 10^{9}$. Similarly, for $t_0=5/7$, choosing $\alpha=2.9$ gives $|\gamma_\chi|\leq 5/7$ for $q\geq 2.1\times 10^{20}$.

Next we derive a fully explicit upper bound on $|\gamma_\chi|$ in terms of $q$ based on the proof of Theorem~\ref{theorem gamma_chi}.

\begin{theorem}\label{theorem: explicit gamma_chi}
    Assume GRH. For all $q\geq 10^{24}$,
    \[
    |\gamma_\chi|\leq \frac{\pi/2}{(\log\log q-1.43)}\sqrt{\frac{\log\log q}{\log\log q-2}}.
    \]
\end{theorem}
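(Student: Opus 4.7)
The plan is to follow the proof of Theorem~\ref{theorem gamma_chi} while tracking every constant explicitly. As before, we may assume $|\gamma_\chi| > 0$, set $\alpha = \log\log q$, and apply Weil's formula \eqref{explicit formula} to the test function $F^\alpha_T$ with
\[
T = \sqrt{\frac{\alpha+1}{\alpha-1}}\frac{\pi}{|\gamma_\chi|}.
\]
Since $\widehat{F^\alpha}(t)\leq 0$ for $|t|\geq \sqrt{(\alpha+1)/(\alpha-1)}\,\pi$, under GRH we have $\sum_\rho \Phi(F^\alpha_T)(\rho)\leq 0$, and the explicit formula rearranges to
\[
\log q \leq I_\chi(F^\alpha_T) + 2\sum_{n=1}^\infty \Re(\chi(n)) F^\alpha_T(\log n) \frac{\Lambda(n)}{\sqrt{n}} + \log \pi.
\]

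The first step is to obtain an explicit version of Lemma~\ref{Lemma I_delta}. Revisiting its proof, one has an exact formula $I_\chi(F^\alpha_T) = (\text{a mean-value term bounded by }c_1\alpha/T) + \frac{\Gamma'}{\Gamma}(\tfrac{1}{4}+\tfrac{\delta_\chi}{2}) + (\text{a tail of size at most }c_2 e^{-T/2})$, and all three pieces admit completely explicit constants $c_1,c_2$ once one uses $|F^{\alpha\prime}(x)|\leq \pi(\alpha+1)$ on $(0,1)$ and $\frac{\Gamma'}{\Gamma}(\tfrac14+\tfrac{\delta_\chi}{2}) = \gamma + 3\log 2 \pm \pi/2$. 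The second step is an effective version of Lemma~\ref{Lemma: sum over p}. Using $F^\alpha(x)\leq (\alpha-1)(1-x)$ for $x\in [0,1]$ (valid for $\alpha\geq 3$) and integrating by parts as in \eqref{prime sum in terms of psi}, the prime sum is bounded by
\[
\frac{\alpha-1}{T}\int_1^{e^T}\frac{\psi(t)}{t^{3/2}}\,\mathrm{d}t + \frac{\alpha-1}{2}\int_1^{e^T}\frac{\psi(t)}{t^{3/2}}\Bigl(1-\frac{\log t}{T}\Bigr)\,\mathrm{d}t,
\]
and here we substitute Schoenfeld's explicit GRH bound $|\psi(t)-t|\leq \frac{1}{8\pi}\sqrt{t}(\log t)^2$ (valid for $t\geq 73.2$) to replace $\psi(t)$ and obtain a completely explicit estimate of the form $(4 + \eta(T))(\alpha-1)e^{T/2}/T$, where $\eta(T)$ is an explicit, decreasing function of $T$ going to $0$.

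Combining these two explicit estimates gives a concrete inequality of the shape
\[
\frac{e^{T/2}}{T/2} \geq \frac{\log q - \log \pi - (c_1\alpha/T + c_3)}{(4+\eta(T))(\alpha-1)} =: \Delta(q,T),
\]
from which I will extract a lower bound on $T$ by inverting $\frac{e^{T/2}}{T/2}\geq \Delta$ via the standard iteration $T/2 \geq \log\Delta + \log\log\Delta + \frac{\log\log\Delta}{\log\Delta}(1 + O(1/\log\Delta))$, keeping the bookkeeping completely numerical. Plugging in $\alpha = \log\log q$ and the explicit $c_i, \eta$ produces a lower bound of the form $T/2 \geq \log\log q - 1.43$ provided $q$ is large enough that the residual error terms fit inside the constant $1.43$. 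The threshold $q\geq 10^{24}$ is chosen precisely so that all the implicit contributions (from $c_1\alpha/T$, $\eta(T)$, and the $\log\pi$) collectively amount to at most $1.43 - (\log 4 + \text{gamma-function constant})$; this is a one-line numerical verification once the formulas are in hand. Translating $T \geq 2(\log\log q - 1.43)$ back through $T = \sqrt{(\alpha+1)/(\alpha-1)}\,\pi/|\gamma_\chi|$ yields
\[
|\gamma_\chi| \leq \frac{\pi/2}{\log\log q - 1.43}\sqrt{\frac{\log\log q + 1}{\log\log q - 1}} \leq \frac{\pi/2}{\log\log q - 1.43}\sqrt{\frac{\log\log q}{\log\log q - 2}},
\]
the last inequality being a trivial algebraic check for $\log\log q > 2$, which holds at $q = 10^{24}$.

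The main obstacle is the bookkeeping: each of the explicit constants $c_1, c_2, c_3$, and the function $\eta(T)$ must be small enough that the final constant in $\log\log q - 1.43$ comes out at most $1.43$ rather than the asymptotic value $\log 4 \approx 1.386$ plus slack. In particular, the choice of $10^{24}$ as the threshold is forced by Schoenfeld's PNT bound (which requires $t\geq 73.2$, hence effectively $T\geq \log 73.2$, and a margin large enough to absorb the $(\log t)^2$ term into a small additive contribution). The remainder of the argument is calculus and arithmetic; no new analytic ideas beyond those already used in Theorem~\ref{theorem gamma_chi} are needed.
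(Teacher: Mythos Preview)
Your approach is essentially the paper's. Two simplifications in the paper's execution are worth adopting: it sets $\alpha=\log\log q-1$ rather than $\log\log q$, so that $\sqrt{(\alpha+1)/(\alpha-1)}=\sqrt{\log\log q/(\log\log q-2)}$ on the nose and no final algebraic inequality is needed; and for the prime sum it uses the unconditional Rosser--Schoenfeld bound $\psi(x)<1.039x$ rather than the GRH error term, which keeps the constant clean (the $4.156$ is just $4\times 1.039$) and avoids carrying any $(\log t)^2$ factors. Also note the sharper derivative bound $|F^{\alpha\prime}(x)|\le \alpha-1$ on $[0,1]$, tighter than your $\pi(\alpha+1)$.
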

\begin{proof}
    We claim that for $\alpha\geq 3$, 
    \begin{equation}\label{explicit Lemma 5 and 6}
    \begin{split}
        &I_\chi(F^\alpha_T)\leq \min\left\{1.505,\frac{8.599}{T}\right\}\cdot (\alpha-1)+4.228,\\
        &2\sum_{n\leq e^T} F^\alpha_T(\log n)\frac{\Lambda(n)}{\sqrt{n}}\leq 4.156(\alpha-1)\frac{e^{T/2}}{T/2}-2.078(\alpha-1),
    \end{split}
    \end{equation}
    which are explicit versions of Lemmas~\ref{Lemma I_delta} and \ref{Lemma: sum over p}, respectively. Indeed, recall from the proof of Lemma~\ref{Lemma I_delta} that
    \[
    I_\chi(F^\alpha_T)\leq F^{\alpha \prime}(x^*) \frac{1}{2T}\int_0^{2T}\frac{xe^{-x/4}}{1-e^{-x}}\md x+\gamma_E+3\log 2+\frac{\pi}{2}
    \]
    for some $x^*\in (0,1)$. It is not hard to show that $|F^{\alpha \prime}(x)|\leq \alpha-1$ on $[0,1]$. Moreover, the integral converges to $17.197\ldots$ as $T\to \infty$, and for any $T>0$ the average of the function $\frac{xe^{-x/4}}{1-e^{-x}}$ on $[0,2T]$ is bounded by its global maximum $1.504\ldots$. This proves the first assertion. The second assertion follows from \eqref{prime sum in terms of psi} and the bound $\psi(x)<1.039x$ due to Rosser and Schoenfeld \cite[Theorem 12]{RS}.

    Now set $T=\sqrt{\frac{\alpha+1}{\alpha-1}}\pi/|\gamma_\chi|$ with $\alpha:=\log\log q-1\geq 3$ by our assumption $q\geq 10^{24}$. Then it can be deduced from \eqref{explicit Lemma 5 and 6} that
    \begin{align*}
        \frac{e^{T/2}}{T/2}\geq \frac{\log \frac{q}{\pi}-I_\chi(F^\alpha_T)+2.078(\alpha-1)}{4.156(\alpha-1)}\geq f(q):=\frac{\log q+0.573(\alpha-1)-5.373}{4.156(\alpha-1)},
    \end{align*}
    and consequently
    \[
    \frac{T}{2}>\log f(q)+\log\log f(q)+\log\left(1+\frac{\log\log f(q)}{\log f(q)}\right).
    \]
    For $\log\log q\leq 300$ we computationally verify that the above quantity is $>\log\log q-1.43$. For larger $q$, since $\frac{\log q}{4.156\log\log q}<f(q)<\log q$ and $\frac{\log \log x}{\log x}$ is decreasing for $x\geq 16$, we have
    \begin{align*}
        \frac{T}{2}>&\log\log q-\log 4.156 +\log\left(1-\frac{\log (4.156\log\log q)}{\log\log q}\right)+\log\left(1+\frac{\log\log \log q}{\log\log q}\right)\\
        >& \log\log q-\log 4.156-0.0053\\
        >&\log\log q-1.43,
    \end{align*}
    thereby completing the proof.
\end{proof}

\section{Extensions of Theorems~\ref{theorem gamma_chi}--\ref{theorem tilde gamma_chi} to general \texorpdfstring{$L$}{}-functions}\label{section: general L function}
We conclude by reformulating Theorems~\ref{theorem gamma_chi}, ~\ref{theorem n_chi} and ~\ref{theorem tilde gamma_chi} in the more general context. We impose the same assumptions on $L(s,\pi)$, the $L$-function under consideration, as in \cite[\S 4]{CCM1}. Suppose that it has an Euler product of the form
\[
L(s,\pi)=\prod_p \prod_{j=1}^m\left(1-\frac{\alpha_{j,\pi}(p)}{p^s}\right)^{-1},\quad \Re(s)>1
\]
where 
\[
|\alpha_{j,\pi}(p)|\leq p^\vartheta
\]
for some constant $0\leq \vartheta\leq 1$. Define the completed $L$-function by 
\[
\Lambda(s,\pi)=L_\infty(s,\pi)L(s,\pi) \qquad \text{where}\quad L_\infty(s,\pi)=N^{s/2}\prod_{j=1}^{m} \Gamma_\mathbb{R}(s+\mu_j)
\]
with $N>0$, $\Gamma_\mathbb{R}(s):=\pi^{-s/2}\Gamma(s/2)$, $\{\mu_j\}_{1\leq j\leq m}=\{\ov{\mu_j}\}_{1\leq j\leq m}$ and $\Re(\mu_j)>-1$, such that it satisfies the functional equation
\[
\Lambda(s,\pi)=\epsilon \ov{\Lambda(1-\ov{s},\pi)}, \qquad |\epsilon|=1.
\]
Suppose further that $\Lambda(s,\pi)$ has $r(\pi)$ poles at $s=0$ with $r(\pi)\leq m$, so that by the functional equation it has the same number of poles at $s=1$. The analytic conductor $C(\pi)$ of $\Lambda(s,\pi)$ is given by 
\[
C(\pi)=N\prod_{j=1}^m(|\mu_j|+3).
\]

Let $F$ be an admissible function. Mestre \cite[\S I.2]{Mestre} established the explicit formula
\begin{equation}\label{Mestre}
\begin{split}
    \sum_\rho \Phi(F)(\rho)=\log \frac{N}{\pi^m}+r(\pi)&\left(\Phi(F)(0)+\Phi(F)(1)\right)-\sum_{j=1}^m I_j(F)\\
    &-2\sum_{\substack{\text{$p$ prime}\\ 1\leq j\leq m,k\geq 1}}\Re(\alpha_{j,\pi}(p)^k)F(k\log p )\frac{\log p}{p^{k/2}}\\
    &-\sum_{-1<\Re(\mu_j)<-1/2}(\Phi(F)(-\mu_j)+\Phi(F)(1+\mu_j))\\
    &-\frac{1}{2}\sum_{\Re(\mu_j)=-1/2}(\Phi(F)(-\mu_j)+\Phi(F)(1+\mu_j))
\end{split}
\end{equation}
\footnote{The last two terms in \eqref{Mestre} do not appear in the original formula as found in \cite[\S I.2]{Mestre} because Mestre assumed that $\Re(\mu_j)\geq 0$, but only those $\mu_j$'s with $\Re(\mu_j)\leq -1/2$ show up when one moves the line of integration and applies the residue theorem. See also \cite[(4.6)]{CCM1} and \cite[Theorems 5.11 \& 5.12]{IwaKow}. }
where the sum runs over zeros of $\Lambda(s,\pi)$ with real part $0\leq \beta\leq 1$, $\Phi(F)$ was defined in \eqref{Phi(F)}, and
\[
I_j(F)=\int_0^\infty \left(\frac{F(x/2)e^{-(1/4+\Re(\mu_j)/2)x}}{1-e^{-x}}-\frac{e^{-x}}{x}\right)\md x.
\]

Let $|\gamma_\pi|$ (resp., $|\wt{\gamma_\pi}|$) and $n_\pi$ denote the height of the lowest (resp., lowest non-real) non-trivial zero of $\Lambda(s,\pi)$ and its order of vanishing at $s=\frac{1}{2}$, respectively. Assuming RH for $\Lambda(s,\pi)$, we apply \eqref{Mestre} to $F^\alpha_T$, $H_T$ and $G^\alpha_T$ defined in \eqref{def F^alpha}, \eqref{def H} and \eqref{def G}, respectively, and follow the same lines of argument as in \S\ref{section: preliminaries} and \S\ref{section: estimates for an individual L(s,chi)}. First, we can prove that
\[
\sum_{j=1}^m I_j(F^\alpha_T)\ll m\left(\frac{\alpha}{T}+1\right)e^{T/2},
\]
\[
\sum_{\substack{\text{$p$ prime}\\ 1\leq j\leq m,k\geq 1}}\Re(\alpha_{j,\pi}(p)^k)F^\alpha_T(k\log p )\frac{\log p}{p^{k/2}} \leq m\sum_{n\leq e^T} F^\alpha_T(\log n)\Lambda(n)n^{\vartheta-1/2}
\ll \frac{m \alpha e^{(1/2+\vartheta)T}}{T},
\]
and the last two terms in \eqref{Mestre} are both
\[
\ll m\int_{-T}^T F^\alpha_T(x)e^{x/2}\md x\ll m\alpha\int_{-T}^T \left(1-\frac{|x|}{T}\right)e^{x/2}\md x\ll \frac{m\alpha e^{T/2}}{T}.
\]
By modifying the proof of Theorem~\ref{theorem gamma_chi}, we find that
\[
|\gamma_\pi|\leq \left(\frac{1}{2}+\vartheta\right)\frac{\pi}{\log\log C(\pi)^{3/m}}+O\left(\frac{1}{\left(\log\log C(\pi)^{3/m}\right)^2}\right).
\]
Next, for $H_T$ we see that the second, third, and last two terms on the right-hand side of \eqref{Mestre} are all $\ll m\frac{e^{T/2}}{T}$, and the fourth term is $\ll m\frac{e^{(1/2+\vartheta)T}}{T}$. Hence
\[
n_\pi\leq \left(\frac{1}{2}+\vartheta\right)\frac{\log C(\pi)}{\log\log C(\pi)^{3/m}}+O\left(\frac{\log C(\pi)}{\left(\log\log C(\pi)^{3/m}\right)^2}\right).
\]
Lastly, similar estimates for $G^\alpha_T$ imply that
\[
|\wt{\gamma_\pi}|\leq \left(1+2\vartheta\right)\frac{\pi}{\log\log C(\pi)^{3/m}}+O\left(\frac{1}{\left(\log\log C(\pi)^{3/m}\right)^2}\right)
\]
in conjunction with the preceding bound on $n_\pi$.

\section{Acknowledgment}
I would like to thank Professor Ghaith Hiary for many fruitful discussions and invaluable suggestions. I would also like to thank the anonymous referee for their comments.

\printbibliography

\end{document}